\setlist[enumerate,1]{label={(\roman*)}}
\newcommand*{\Theorem}{Theorem}
\newcommand*{\Proposition}{Proposition}
\newcommand*{\Lemma}{Lemma}
\newcommand*{\Corollary}{Corollary}
\newcommand*{\Definition}{Definition}
\newcommand*{\Question}{Question}
\newcommand*{\Remark}{Remark}
\newcommand*{\Notation}{Notation}
\newcommand*{\Figure}{Figure}
\theoremstyle{plain}
\newtheorem{theorem}{\Theorem}
\newtheorem{proposition}[theorem]{\Proposition}
\newtheorem{corollary}[theorem]{\Corollary}
\newtheorem{lemma}[theorem]{\Lemma}
\theoremstyle{definition}
\theoremstyle{remark}
\newtheorem{remark}[theorem]{\Remark}
\newcommand{\T}{\mathbf{T}}
\newcommand{\meas}{\bm\upmu}
\newcommand{\leb}{\bm\uplambda}
\newcommand{\Cond}{\textup{Cond}}
\newcommand{\qfpre}{\textup{qf-}}
\newcommand{\symdiff}{\mathbin{\triangle}}
\title{Measure theory and higher order arithmetic}
\author{Alexander P.\ Kreuzer}
\address{Department of Mathematics \\
Faculty of Science \\
National University of Singapore \\
Block S17, 10 Lower Kent Ridge Road \\
Singapore 119076 
}
\email{matkaps@nus.edu.sg}
\urladdr{http://www.math.nus.edu.sg/~matkaps/}
\thanks{The author was partly supported by the RECRE project, and the Ministry of Education of Singapore through grant R146-000-184-112 (MOE2013-T2-1-062).}
\subjclass[2010]{03F35, 03B30 (primary); 03E35 (secondary)}
\date{\today}
\begin{document}

\begin{abstract}
  We investigate the statement that the Lebesgue measure defined on all subsets of the Cantor-space exists.  
  As base system we take $\ls{ACA_0^\omega} +\lpf{\mu}$. The system $\ls{ACA_0^\omega}$ is the higher order extension of Friedman's system \ls{ACA_0}, and \lpf{\mu} denotes Feferman's $\mu$, that is a uniform functional for arithmetical comprehension defined by $f(\mu(f))=0$ if $\Exists{n} f(n)=0$ for $f\in \Nat^\Nat$.
  Feferman's $\mu$ will provide countable unions and intersections of sets of reals and is, in fact, equivalent to this. For this reason $\ls{ACA_0^\omega} + \lpf{\mu}$ is the weakest fragment of higher order arithmetic where $\sigma$-additive measures are directly definable.

  We obtain that over $\ls{ACA_0^\omega} + \lpf{\mu}$ the existence of the Lebesgue measure is $\Pi^1_2$-conservative over \ls{ACA_0^\omega} and with this conservative over \ls{PA}. Moreover, we establish a corresponding program extraction result.
\end{abstract}

\maketitle

In this paper, we will investigate the statement that the Lebesgue  measure, defined \emph{on all subsets} of the Cantor-space $2^\Nat$, or equivalently the unit interval $[0,1]$, exists. 
The setting of this investigation will be higher order arithmetic---as base system we will take $\ls{ACA_0^\omega} + \lpf{\mu}$. This is the higher order extension of Friedman's system for arithmetical comprehension \ls{ACA_0} together with Feferman's $\mu$, a functional which provides a uniform variant of arithmetical comprehension. 
The functional $\mu$ will be used to define countable unions and intersections of sets and is, in fact, equivalent to this.
With this, $\ls{ACA_0^\omega} + \lpf{\mu}$ is the weakest system in which the textbook definition of measures, including $\sigma$-additivity, can be formulated.
In addition to that it is strong enough to develop most of analysis since it contains \ls{ACA_0}, see \cite{sS09}.  Moreover, in the same setting also other investigations on higher order reverse mathematics have been carried out, see for instance \cite{jH08} and \cite{nSta}.
We therefore believe that $\ls{ACA_0^\omega} + \lpf{\mu}$ is a suitable system for investigating measure theory and the Lebesgue measure.

Our main result is that $\ls{ACA_0^\omega} + \lpf{\mu}$ plus the existence of the  Lebesgue  measure defined \emph{on all subsets} of the Cantor-space $2^\Nat$ (denoted by \lpf{\leb}) is $\Pi^1_2$\nobreakdash-conservative over \ls{ACA_0^\omega}. With this, $\ls{ACA_0^\omega} + \lpf{\mu} + \lpf{\leb}$ is conservative over Peano arithmetic (\ls{PA}). In addition to that our proof yields a method translating a derivation of a $\Pi^1_2$\nobreakdash-statement in $\ls{ACA_0^\omega} + \lpf{\mu} + \lpf{\leb}$ into a derivation in \ls{ACA_0^\omega}, and it exhibits program extraction (see \prettyref{thm:lebex} below).

This is interesting from a foundational viewpoint since it provides the consistency of the existence of the Lebesgue measure in a weak theory. For full Zermelo Fraenkel set theory without choice this is long known---the classical forcing construction of Solovay gives a model satisfying $\ls{ZF} + \lp{DC} + \lpf{\leb}$ (among other properties). The construction of this model, and in fact any model of \lp{ZF} in which all subsets of $2^\Nat$ are measurable, requires the existence of an inaccessible cardinal and goes, with this, even beyond \ls{ZFC}. See \cite{rS70} and \cite{sSh84,jR84}. As we will see this is not the case here.

Models of higher order arithmetic containing only measurable subsets of $2^\Nat$ can easily be constructed, see Theorem~7.6.5 of \cite{sF77}. However, adding the actual Lebesgue measure $\leb$ to the model/theory is difficult since it is a 4th order object and one has to trace the influence it has on 3rd and 2nd order objects. To the knowledge author this has not been considered, yet.
Usually one circumvents this problem by substituting measure theory with integration theory, see \cite{hF80} and \cite{YS90,BGS02}.

Also, our result has possible uses in applied proof theory and proof mining, i.e.\@ the analysis of proofs from mathematics using logical tools, see \cite{uK08} for an introduction. 
Here one encounters proofs using measure theory for instance in the analysis of ergodic theorems. 
In some cases the use of a measure can substitute by integration theory. Then the statement with measure is transformed to a statement about elements in suitable space, e.g.\@ $L_1$ or $L_2$. This is for instance the case in the analysis of the mean ergodic theorem, see \cite{AGT10,KL09}. 
In cases where this is not possible, for instance in the analysis of the pointwise ergodic theorem, see again \cite{AGT10}, the measure is explicitly used in the process of the extraction of a rate of meta-stability, without having a metatheorem which would guarantee a priori the success of this extraction.
Our result here provides such a metatheorem.

The result of this paper is obtained with a combination of the functional interpretation and program normalization. This technique was developed together with U.\@ Kohlenbach in \cite{KK12}. A similar approach was also used to eliminate ultrafilters in higher order arithmetic, see \cite{aK12b,aK15}.

The paper is organized as follows. In \prettyref{sec:log} the logical systems are introduced; in \prettyref{sec:meas} the existence of the Lebesgue measure is formalized in $\ls{ACA_0^\omega} + \lpf{\mu}$, the main result (Theorems \ref{thm:leb} and \ref{thm:lebex}) is stated and the proof is sketched; in Sections \ref{sec:con}, \ref{sec:pt} theorems and lemmas used in the proof are given; in \prettyref{sec:proof} the full proof of the result is given; in \prettyref{sec:baire} it is indicate how one can extend Theorems \ref{thm:leb} and \ref{thm:lebex} to include the statement that all sets have the property of Baire.

\section{Logical Systems}\label{sec:log}

We will work in fragments of Peano arithmetic in all finite types.
The set of all finite types $\T$ is defined to be the smallest set that satisfies 
\[
0\in \T, \qquad \rho,\tau\in \T \Rightarrow \tau(\rho)\in \T
.\]
The type $0$ denotes the type of natural numbers and the type $\tau(\rho)$ denotes the type of functions from $\rho$ to $\tau$. The pure types are denoted by natural numbers. They are given by
\[
0:= 0,\qquad n+1 := 0(n) .
\]
The degree of a type is defined by
\[
\textup{deg}(0):=0, \qquad \textup{deg}(\tau(\rho)) := \max(\textup{deg}(\tau),\textup{deg}(\rho)+1)
.\]
It is clear that $\textup{deg}(n) = n$ and that any object of type $\tau$ with $\textup{deg}(\tau)=n$ can be coded by an object of the pure type $n$. 
The type of a variable/term will sometimes be written as superscript.

Equality $=_0$ for type $0$ objects will be added as primitive notion to the systems together with the usual equality axioms.
Higher type equality $=_{\tau\rho}$ will be treated as abbreviation:
\[
x^{\tau\rho}=_{\tau\rho} y^{\tau\rho} :\equiv \Forall{z^\rho} xz =_\tau yz
.\]

Define the $\lambda$-combinators $\Pi_{\rho,\sigma}, \Sigma_{\rho,\sigma,\tau}$ for $\rho,\sigma,\tau\in\T$ to be the functionals satisfying 
\[
\Pi_{\rho,\sigma} x^\rho y^\sigma =_\rho x , \qquad \Sigma_{\rho,\sigma,\tau} x^{\tau\sigma\rho} y^{\sigma\rho}z^{\rho} =_\tau xz(yz)
.\]
Similar define the recursor  $R_\rho$ of type $\rho$ to be the functional satisfying 
\[
R_\rho 0yz =_\rho y, \qquad R_\rho (Sx^0)yz =_\rho z(R_\rho xyz)x
.\]
Let \emph{G\"odel's system $T$} be the $\T$-sorted set of closed terms that can be build up from $0^0$, the successor function $S^1$, the $\lambda$-combinators and, the recursors $R_\rho$ for all finite types $\rho$.
Using the $\lambda$-combinators one easily sees that $T$ is closed under $\lambda$-abstraction, see \cite{aT73}.
Denote by $T_0$ the subsystem of G\"odel's system $T$, where primitive recursion is restricted to recursors $R_0$.
The system $T_0$ corresponds to the extension of Kleene's primitive recursive functionals to mixed types, see \cite{sK59}, whereas full system $T$ corresponds to G\"odel's primitive recursive functionals, see \cite{kG58}.
By $T_0[F]$ we will denote the system resulting from adding a function(al) $F$ to $T_0$.

Let \lp[QF]{AC} be the schema
\[
\Forall{x^\rho}\Exists{y^\tau} \lf{A}_\qf(x,y) \IMPL \Exists{f^{\tau\rho}}\Forall{x^\rho} \lf{A}_\qf(x,f(x))
,\]
where $\lf{A}_\qf$ is a quantifier-free formula and $\rho,\tau\in \T$. 
If the types of $x,y$ are restricted to  $\rho,\tau$ we write \lp[QF]{AC^{\rho,\tau}}.
The schema \lp[QF]{AC^{0,0}} corresponds to recursive comprehension (\lp[\Delta^0_1]{CA}) in a second order context.

The system \ls{RCA_0^\omega} is defined to be the extension of the term system $T_0$ by $\Sigma^0_1$\nobreakdash-induction, \lp[QF]{AC^{1,0}}, and 
the extensionality axioms 
\[
(\lp{E_{\rho,\tau}})\colon\Forall{z^{\tau\rho} \!,x^\rho \!,y^\rho} (x=_\rho y \IMPL zx =_\tau zy)
\]
for all $\tau,\rho\in \T$.
(Strictly speaking the system \ls{RCA_0^\omega} was defined in \cite{uK05b} to contain only quantifier free induction instead of $\Sigma^0_1$\nobreakdash-induction. Since $\Sigma^0_1$\nobreakdash-induction is provable in that system, we may also add it directly.)
The system \ls{ACA_0^\omega} is defined to be $\ls{RCA_0^\omega}+\ls[\Pi^0_1]{CA}$.

These systems are conservative over their second order counterparts, where the second order part is given by functions instead of sets. These second order systems can then be interpreted in \ls{RCA_0}, resp.\@ \ls{ACA_0}. See \cite{uK05b}.

We will also make use of a uniform variant of \ls{ACA_0^\omega} given by Feferman's $\mu$. This is the functional of type 2 satisfying 
\begin{equation}\label{eq:mu}
  f(\mu(f))= 0\quad\text{if}\quad \Exists{x^0} f(x)=0 
.\end{equation}
We will denote the statement that Feferman's $\mu$ exists by \lpf{\mu}. For notational ease we will usually add a Skolem constant for $\mu$ and denote this statement also with \lpf {\mu}. It is straightforward to see that $\ls{RCA_0^\omega}+\lpf{\mu} \vdash \lp[\Pi^0_1]{CA}$. The reversal is not true, since \ls{ACA_0^\omega} does not contain enough choice to build such a functional. However, we have the following conservativity.

\begin{theorem}[{\cite[Theorem~2.5]{jH08}}]
  $\ls{ACA_0^\omega} + \lpf{\mu}$ is conservative over the second order theory \ls{ACA_0}.
\end{theorem}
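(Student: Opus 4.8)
The plan is to prove the conservation result model-theoretically, by showing that every model of $\ls{ACA_0}$ can be expanded to a model of $\ls{ACA_0^\omega}+\lpf{\mu}$ with the same first- and second-order part. Once this is done, conservativity follows by a standard completeness argument: if a sentence $\varphi$ in the language of $\ls{ACA_0}$ were not provable in $\ls{ACA_0}$, there would be a model $(M,\mathcal{S})\models\ls{ACA_0}+\neg\varphi$; expanding it and noting that $\neg\varphi$, being second order, keeps its truth value would yield a model of $\ls{ACA_0^\omega}+\lpf{\mu}+\neg\varphi$, so $\varphi$ is not provable there either.

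First I would fix a model $(M,\mathcal{S})\models\ls{ACA_0}$, with $M$ the first-order part and $\mathcal{S}$ the collection of functions forming the second-order part, and build a type structure $\mathcal{M}$ over it. I would take the type-$0$ universe to be $M$ and the type-$1$ universe to be $\mathcal{S}$, and let the objects of higher type be the extensional collapses of the closed terms of $T_0[\mu]$ with parameters drawn from $M$ and $\mathcal{S}$. The decisive feature of this definable-functional structure, as opposed to the full set-theoretic type hierarchy, is that every higher-type object comes with a syntactic description from second-order parameters; hence its graph is arithmetical in parameters from $\mathcal{S}$, and questions about it can be decided inside $(M,\mathcal{S})$.

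The verification then reduces to checking the axioms. The combinators $\Pi_{\rho,\sigma},\Sigma_{\rho,\sigma,\tau}$ and Feferman's $\mu$ are present by construction, and $\mu$ satisfies \eqref{eq:mu} because a least witness to $\Exists{x^0}f(x)=0$ exists in $M$ by the least-number principle. The axioms $\lp{E_{\rho,\tau}}$ hold after passing to the extensional quotient, using that $\mu$, the combinators and $R_0$ are extensional and that application preserves extensional equality. For $\lp[QF]{AC^{1,0}}$, a premise $\Forall{f^1}\Exists{n^0}\lf{A}_\qf(f,n)$ yields the choice functional $f\mapsto\mu(\lambda n.\,t(f,n))$, where $t$ is the characteristic term of $\lf{A}_\qf$; this is again a $T_0[\mu]$-term and hence an object of $\mathcal{M}$. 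Arithmetical comprehension $\ls[\Pi^0_1]{CA}$ reduces to arithmetical comprehension in $(M,\mathcal{S})$, since the higher-type parameters occurring in an instance are definable from $\mathcal{S}$, so the defined set is arithmetical in $\mathcal{S}$-parameters and already belongs to $\mathcal{S}$. Finally $\Sigma^0_1$-induction is inherited from $(M,\mathcal{S})$.

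The step I expect to be the main obstacle is the treatment of the recursor $R_0$ over the possibly nonstandard base $M$: for a nonstandard argument $x$ and a higher-type input $z$, the value $R_0 x y z$ cannot be obtained by a meta-level recursion, and one must instead carry the recursion out internally. This is exactly where definability is used---since the iterated map is arithmetical in $\mathcal{S}$-parameters, $\ls{ACA_0}$ supplies a coded sequence witnessing the recursion and hence a genuine value $R_0 x y z\in M$ satisfying the defining equations. Making this internal recursion precise, and checking that the interpretation is thereby total and well defined on all of $\mathcal{M}$, is the technical heart of the argument; the remaining clauses are a routine induction on the build-up of terms and types.
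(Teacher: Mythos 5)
The paper does not prove this theorem at all: it imports it by citation from Hunter's thesis \cite{jH08} (with \prettyref{rem:mucons} pointing to an alternative, proof-theoretic route via the functional interpretation, as in \cite{AF98}, for $\Pi^1_2$ statements). So your attempt can only be compared with the cited source, and there your plan is essentially the standard one: conservation results of this shape are proved exactly by expanding an arbitrary model $(M,\mathcal{S})$ of the second-order theory to a type structure whose type-$0$ and type-$1$ universes are $M$ and $\mathcal{S}$, and then invoking completeness. Your sketch also gets the substance right in the two places where it could go wrong. Since $\mu$ is discontinuous, the continuous-functional (ECF-style) model used by Kohlenbach for the conservativity of \ls{RCA_0^\omega} over \ls{RCA_0} is unavailable here, so a definability-based structure such as your extensional collapse of the $T_0[\mu]$-term model with parameters from $M\cup\mathcal{S}$ is forced; and the technical heart is, as you say, a simultaneous (external) induction on term skeletons showing that every denotation of a type-$\le 1$ term is arithmetically definable from its $\mathcal{S}$-parameters --- this is what makes the internal $R_0$-recursion total over nonstandard $M$, keeps the type-$1$ universe equal to $\mathcal{S}$ (so that second-order sentences retain their truth values), and verifies $\lp[\Pi^0_1]{CA}$ and \lp[QF]{AC^{1,0}} in the expansion; it is also exactly the step that needs \ls{ACA_0} and fails over \ls{RCA_0}, as it must, since $\mu$ is not conservative over \ls{RCA_0}. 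Two details you should make explicit in a full write-up: the passage between the set-based second-order part of \ls{ACA_0} and the function-based type-$1$ universe (routine, but it is where the interpretation mentioned at the end of Section~\ref{sec:log} does its work), and the fact that $\mu$ must be interpreted as the least-witness functional so that it is hereditarily extensional and survives the extensional collapse --- the same normalization the paper itself invokes in Section~\ref{sec:elimex}.
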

\begin{remark}\label{rem:mucons}
  For $\Pi^1_2$ statements there exists a direct algorithm, based on the functional interpretation, rewriting proofs in $\ls{ACA_0^\omega} + \lpf{\mu}$ into proofs in \ls{ACA_0^\omega}, see \cite[Theorem~8.3.4]{AF98}.
\end{remark}

Real numbers will be presented as Cauchy reals coded by type $1$ functions. Equality and comparison will be written as $=_\Real$, $<_\Real$, $\le_\Real$. This relations are treated as abbreviations for the respective formulas. For details see \cite[Chapter~4]{uK08}.

\section{Measures in $\ls{RCA_0^\omega} + \lpf{\mu}$}\label{sec:meas}

We will code sets as characteristic functions. For instance a set $Y \subseteq \Nat$ is given by a function $\chi_Y$ of type $1$ such that
\[
\chi_Y(n) =_0 0 \quad\text{if{f}}\quad n\in Y \qquad \text{for all $n\in \Nat$},
\]
or a set $X\subseteq 2^\Nat$ is given by a function $\chi_X$ of type $2$ with
\[
\chi_X(\chi_Y) =_0 0 \quad\text{if{f}}\quad Y\in X.
\]
It is clear that each function $f^1$ is a characteristic function of a set $Y\subseteq \Nat$.
By replacing the characteristic function $\chi_Y$ with a normalization taking only values in $\{0,1\}$ each functional $F^2$ again is a characteristic function of a set $X\subseteq 2^\Nat$, i.e., $\lambda f^1. F(\lambda n^0 . \sg f(n))$ with $\sg(0) = 0$ and $\sg(m) = 1$ for $m>0$ is a characteristic function given by $F$. Thus, we can freely quantify over sets without adding any additional quantification.

The usual set theoretic operations like (finite) intersection, (finite) union and complement can then be defined using $\min()$, $\max()$, $1\dotminus x$. We will use the usual symbols $\cap$, $\cup$, $X^\complement$ as abbreviation for this. We will also use the abbreviation $X_1\setminus X_2 := X_1 \cap X_2^\complement$.

With the help of $\mu$ one also obtains countable intersections and unions. For instance, the characteristic function of $\bigcup_{n\in\Nat} X_n$ is given by
\[
\lambda f .\, \chi_{X_{N(f)}}(f) \quad\text{with}\quad N(f):= {\mu(\lambda n^0  \!.\, \chi_{X_n}(f))}
.\]

With this one can formulate in $\ls{RCA_0^\omega} + \lpf{\mu}$ the statement that a functional $\meas$ defines a measure on the Cantor-space $2^\Nat$. In more detail this is given by the following.
\begin{enumerate} 
\item non-negativity: $\Forall{X\in \mathcal{P}(2^\Nat)} \left(\meas(X) \ge_\Real 0\right)$,
\item $\meas(\emptyset)=_\Real 0$, and
\item $\sigma$\nobreakdash-additivity:
  \[
   \Forall{(X_n)_{n\in\Nat} \subseteq \mathcal{P}(2^\Nat)} \left(\Forall{i}\Forall{j>i} \left(X_i \cap X_j = \emptyset\right) \IMPL \meas\left(\bigcup_{n\in\Nat} X_n\right) =_\Real \sum_{n\in\Nat} \meas(X_n)\right)
   .\]
   Note that the limit in the sum is definable using $\mu$.
\end{enumerate}
A measure $\leb$ is the Lebesgue measure if it additionally satisfies the following.
\begin{enumerate}[resume]
\item Measure on basic open sets:
  \[
  \Forall{s\in 2^{<\Nat}} \left(\leb([s])=_\Real 2^{-\lth(s)}\right)
  \]
  where $[s]$ denotes the basic open set coded by $s$, i.e.,
  \[
  [s] := \left\{ X\subseteq 2^\Nat \sizeMid \Forall{i<\lth(s)} \left((s)_i = 0 \IFF i\in X\right) \right\} .
  \]
\end{enumerate}

We will denote by $\lpf{\leb}$ the statement the Lebesgue measure defined on all subsets of the Cantor-space exists. As we did with Feferman's $\mu$, we will usually add $\leb$ as Skolem constant to the system and denote this also with \lpf{\leb}. For later use we will write down the full statement of this axiom.
\[
\lpf{\leb}\colon \left\{ 
  \begin{aligned}
    \Exists{\leb^3} \Big( & \hphantom{\!\AND\,} \Forall{X^2} \left(\leb(X) \ge_\Real 0 
      \AND \left((\Forall{Y^1} X(Y)\neq _0 0) \IMPL \leb(X) =_{\Real} 0 \right)\right) \\
    &{\!\AND\,} \Forall{(X_i)_i^2} \left( \leb\left(\bigcup_{i\in\Nat} X'_i\right) =_\Real \sum_{i\in\Nat} \leb(X'_i)\right) \\
    &{\!\AND\,} \Forall{s^0} \leb([s])=_\Real 2^{-\lth(s)} \\
    &{\!\AND\,} \Forall{X^2} \leb(\lambda f^1\!.\, \sg(Xf)) =_\Real \leb(\lambda f^1\!.\, X(\lambda x^0\!.\, \sg(fx))) =_\Real \leb(X) \Big)
  \end{aligned}
  \right.
\]
where $X'_i := X_i \setminus \bigcup_{j<i} X'_j$. The last line indicates that $\leb$ is compatible with the coding of sets.

The Cantor-space and the unit interval are measure-theoretically equivalent. For instance the map $F\colon 2^\Nat \longto [0,1]$ with $F(g) := \sum_{i\in \Nat} 2^{-(i+1)} \cdot g(i)$ is a measure-preserving surjection. Therefore, \lpf{\leb} also gives the Lebesgue measure on the unit interval.

Since the axiom of choice proves the existence of non-measurable sets, $\lpf{\leb}$ is not consistent with \ls{ZFC}. However---as we already mentioned---by a classical result of Solovay there is, under the assumption that inaccessible cardinals exists, a model of $\ls{ZF}+ \lp{DC}$ in which each set is measurable and which, with this, satisfies $\lpf{\leb}$.

We are now in the position to state the main results of this paper.
\begin{theorem}\label{thm:leb}
  $\ls{ACA_0^\omega} + \lpf{\mu} + \lpf{\leb}$ is $\Pi^1_2$\nobreakdash-conservative over \ls{ACA_0^\omega}.
\end{theorem}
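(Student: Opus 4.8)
The plan is to combine the functional interpretation with a normalization of the extracted term that eliminates the fourth-order constant $\leb$. Suppose $\ls{ACA_0^\omega} + \lpf{\mu} + \lpf{\leb}$ proves a $\Pi^1_2$\nobreakdash-sentence $\Forall{f^1}\Exists{g^1}\lf{A}(f,g)$ with $\lf{A}$ arithmetical. Working classically, I would first apply a negative translation followed by G\"odel's Dialectica interpretation. Since $\mu$ and $\leb$ are present as Skolem constants, the interpretation stays inside the term system $T_0[\mu,\leb]$ and yields a closed term $t$ of type $1(1)$ together with a quantifier-free verification that $\Forall{f^1}\lf{A}_\qf(f,tf)$, where $\lf{A}_\qf$ is the matrix produced by the interpretation. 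This reduces the theorem to removing the constant $\leb$ from $t$: the contribution of $\mu$ is already controlled, since by \prettyref{rem:mucons} any $\Pi^1_2$\nobreakdash-consequence witnessed by a term of $T_0[\mu]$ can be pushed down to \ls{ACA_0^\omega}.

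Before the elimination, I would check that adjoining $\lpf{\leb}$ adds nothing to the extracted term beyond the bare constant $\leb$. Once $=_\Real$ is unfolded into its $\Pi^0_1$\nobreakdash-form, the clauses of $\lpf{\leb}$---non-negativity, $\leb(\emptyset)=_\Real 0$, the values $2^{-\lth(s)}$ on basic open sets, and compatibility with the coding---are (generalized) universal statements, so their functional interpretation is self-realizing and introduces no recursion. The only delicate clause is $\sigma$\nobreakdash-additivity, whose right-hand side $\sum_{i\in\Nat}\leb(X'_i)$ is an infinite sum: its interpretation calls for a modulus witnessing the Cauchy property of the partial sums. Here I would use that these partial sums are nondecreasing and bounded by $\leb(2^\Nat)=_\Real 1$, so a modulus is definable from $\mu$ together with $\leb$, keeping the extracted data inside $T_0[\mu,\leb]$ and avoiding any form of bar recursion.

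The heart of the proof, and the step I expect to be the main obstacle, is the normalization that eliminates $\leb$. Fixing an input $f^1$, I would observe that, because recursion in $T_0$ is restricted to the ground-type recursor $R_0$ and $\mu$ is total, the evaluation of $tf$ terminates after applying the type-$3$ constant $\leb$ to only finitely many type-$2$ arguments $X_1,\dots,X_k$, which may be arranged so that each $X_j$ is assembled from $f$, basic open sets, the Boolean operations, and $\mu$\nobreakdash-indexed countable unions and intersections, using only the earlier returned values $\leb(X_1),\dots,\leb(X_{j-1})$. The key point to establish is that every such $X_j$ is then (provably) a Borel set with a base-definable code, so that the query may be answered by the genuine Lebesgue measure of $X_j$, a real definable in $\ls{ACA_0^\omega}+\lpf{\mu}$ from the code built out of the preceding answers. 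Replacing each call to $\leb$ by this on-the-fly definable answer turns $t$ into a term $t'$ of $T_0[\mu]$ computing a witness $g=t'f$, and---crucially---the finitely many instances of the $\leb$\nobreakdash-axioms that enter the verification all hold for the genuine Borel Lebesgue measure, with $\mu$ supplying the limits and the modulus required for the $\sigma$\nobreakdash-additivity instances. The obstacle is exactly to make this precise: to prove that the hereditary, adaptive type-$3$ queries never leave the Borel sets, to organize them so that each depends only on earlier answers, and to formalize both the recording procedure and the necessary amount of Borel measure theory---in particular $\sigma$\nobreakdash-additivity of the internally defined Lebesgue measure---inside $\ls{ACA_0^\omega}+\lpf{\mu}$.

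Once $t'\in T_0[\mu]$ and $\ls{ACA_0^\omega}+\lpf{\mu}\vdash\Forall{f^1}\lf{A}_\qf(f,t'f)$ are in hand, \prettyref{rem:mucons} transfers this to a proof in $\ls{ACA_0^\omega}$, and hence $\ls{ACA_0^\omega}\vdash\Forall{f^1}\Exists{g^1}\lf{A}(f,g)$. Since $\ls{ACA_0^\omega}$ is conservative over $\ls{ACA_0}$ and thus over $\ls{PA}$, the theorem follows. Reading the construction effectively---the extracted term $t$ together with the $\leb$\nobreakdash-eliminating normalization---yields the accompanying program-extraction statement (\prettyref{thm:lebex}).
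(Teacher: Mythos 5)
Your overall strategy---Dialectica extraction treating $\leb$ as an oracle constant, a normalization replacing each $\leb$-call by an internally computed measure of an arithmetic set, and a final appeal to $\mu$-conservativity---is indeed the skeleton of the paper's proof, but two essential ingredients are missing, and each breaks your argument as written. First, extensionality: $\ls{ACA_0^\omega}$ contains the full extensionality axioms $\lp{E_{\rho,\tau}}$, and these admit no Dialectica interpretation (Howard's classical obstruction), so you cannot ``first apply a negative translation followed by G\"odel's Dialectica interpretation'' to this theory. One must first perform elimination of extensionality and pass to a \emph{neutral} system (\prettyref{cor:extelim}); this is not routine preprocessing here, since one has to verify that $\lpf{\leb}$ implies its own translation $\lpf{\leb}_e$, which the paper does by showing that $\lpf{\leb}$ proves $\leb$ is extensional. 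Moreover, full neutrality---not even weak extensionality---is exactly what later guarantees (\prettyref{pro:cutderiv}) that distinct oracle occurrences in the verifying derivation can be replaced \emph{independently} by terms that are not extensional in the coded set. Without it, your claim that ``the finitely many instances of the $\leb$-axioms that enter the verification all hold'' after substitution is unjustified: a (weakly) extensional verifying proof may tie different oracle calls to one another. Relatedly, your termination analysis (``finitely many type-$2$ queries'') is semantic, carried out for each fixed $f$; what the conservativity proof needs is a term- and derivation-level normal form (\prettyref{thm:cut3}, \prettyref{pro:cutderiv}) so that the substituted verification remains a single proof in $\EPAw + \lpf{\mu}$.

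Second, you have misidentified the delicate clause of $\lpf{\leb}$. The null-set clause $(\Forall{Y^1} X(Y)\neq_0 0) \IMPL \leb(X) =_\Real 0$ is \emph{not} a generalized universal statement: its premise is universal, so classically it is $\Exists{Y}\left(X(Y) \neq_0 0 \IMPL \leb(X) =_\Real 0\right)$, and its functional interpretation demands a Skolem functional $\upepsilon^3$ selecting an element of every set of positive measure (the paper introduces it via \lp[QF]{AC} before interpreting). The extracted terms therefore live in $T_0[\mu,\leb,\upepsilon]$, and your elimination procedure must also answer $\upepsilon$-queries: given a $T_0[\mu]$-term coding an arithmetic set of positive measure, produce---by a term of $T_0[\mu]$---a member of that set. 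This is \prettyref{lem:meas1}\ref{enum:meas1:3} and requires a genuine effective selection argument (a $\emptyset^{(k)}$-computable subtree of positive measure whose branches lie in the set), not merely the computation of measure values from moduli of uniform continuity and arithmetic normal forms. By contrast, the clause you single out as delicate, $\sigma$-additivity, is harmless: relative to $\mu$ the limit of the partial sums is directly definable, as the paper itself notes, so that clause is purely universal modulo $\mu$ and interprets itself.
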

As immediate consequence we get that  $\ls{ACA_0^\omega} + \lpf{\mu} + \lpf{\leb}$ is consistent, even without the assumption of an inaccessible cardinal.

Since \ls{ACA_0^\omega} is conservative over \ls{ACA_0} which is conservative over \ls{PA}, see \cite[Remark~IX.1.7]{sS09}, \cite{hF76}, we immediately get the following corollary.
\begin{corollary}
  $\ls{ACA_0^\omega} + \lpf{\mu} + \lpf{\leb}$ is conservative over \ls{PA}.
\end{corollary}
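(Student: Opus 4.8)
The corollary itself is immediate: every sentence in the first-order language of \ls{PA} is in particular $\Pi^1_2$, so by \prettyref{thm:leb} anything of this form provable in $\ls{ACA_0^\omega}+\lpf{\mu}+\lpf{\leb}$ is already provable in $\ls{ACA_0^\omega}$, whence in \ls{ACA_0} and finally in \ls{PA} by the conservativity results cited above. The substance therefore lies entirely in \prettyref{thm:leb}, whose proof I would approach as follows. The plan is to combine the negative translation and G\"odel's functional interpretation with a normalization analysis of the extracted program, extending the elimination algorithm for $\ls{ACA_0^\omega}+\lpf{\mu}$ recorded in \prettyref{rem:mucons} so that it additionally absorbs the Skolem constant $\leb$. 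A $\Pi^1_2$ sentence may be written as $\Forall{f^1}\Exists{g^1}\Forall{n^0}A_0(f,g,n)$ with $A_0$ quantifier-free, so it suffices, from a derivation of it in $\ls{ACA_0^\omega}+\lpf{\mu}+\lpf{\leb}$, to produce a closed functional $G$ of type $1(1)$ available in $\ls{ACA_0^\omega}$ together with an $\ls{ACA_0^\omega}$-proof of $\Forall{f}\Forall{n}A_0(f,Gf,n)$. First I would apply the negative translation and then the functional interpretation; since $\mu$ and $\leb$ enter only as Skolem constants, this yields a realizing term $t\in T_0[\mu,\leb]$ whose verifying equations are provable in the term theory augmented by the defining equation \eqref{eq:mu} and the four clauses of $\lpf{\leb}$.

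The interpretation of the $\mu$-axiom is realized by $\mu$ itself and is removed exactly as in \prettyref{rem:mucons}, so the new work concerns $\leb$. The genuine computational content of $\lpf{\leb}$ is concentrated in the $\sigma$-additivity clause: the equality $\leb(\bigcup_{i}X_i')=_\Real\sum_{i}\leb(X_i')$ conceals a modulus-of-convergence quantifier $\Forall{k}\Exists{N}$ bounding the tail $\leb(\bigcup_{i\ge N}X_i')$, and the functional interpretation demands a functional returning such an $N$ from $k$ and the sequence $(X_i)_i$. The remaining clauses carry no such modulus, and their interpretation only obliges one to certify emptiness, or to read off a measure value, on the concrete sets that the program feeds into $\leb$.

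The key step is a normalization of $t$. I would argue that every subterm to which $\leb$, or the $\sigma$-additivity modulus, is applied denotes, relative to the input $f$, a subset of $2^\Nat$ whose characteristic functional is built from $f$ and $\mu$ alone, hence an arithmetically-in-$f$ definable and therefore Lebesgue measurable set. On such effectively presented sets the four clauses pin down $\leb$ uniquely as the genuine Lebesgue measure, and both its values and the decay of its tails are obtained from $f$ by arithmetical-in-$f$ operations. These are exactly the operations available in $\ls{ACA_0^\omega}$: the countable unions and intersections and the requisite unbounded searches are supplied by $\mu$, while the convergence of the bounded monotone partial sums is available already at the strength of \ls{ACA_0}. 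Consequently one can replace $\leb$ throughout $t$ by an $\ls{ACA_0^\omega}$-definable effective measure $\tilde\leb$ that agrees with $\leb$ on every set actually queried and that supplies the $\sigma$-additivity modulus on the finitely generated family of sequences that actually occur, arriving at a $\leb$-free term $G$ together with an $\ls{ACA_0^\omega}$-proof that $G$ still realizes $A_0$.

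I expect the main obstacle to be precisely this elimination of the $\sigma$-additivity modulus. Unlike the other clauses it is not validated by $\leb$ on its own, and for an abstract measure defined on all subsets there is in general no computable modulus of convergence. The normalization must therefore certify that the program invokes additivity only on sequences of effectively presented sets, for which $\ls{ACA_0^\omega}$ can both compute the tail measures and prove their convergence, so that the missing witness can be supplied internally. Checking that the normalized program queries $\leb$ only on such sets, and that the passage from $\leb$ to $\tilde\leb$ leaves the extracted $\Pi^1_2$ witness and its correctness proof intact, is the technical heart of the argument and is developed in the subsequent sections.
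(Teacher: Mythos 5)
Your derivation of the corollary is exactly the paper's: every arithmetical sentence is trivially $\Pi^1_2$, so \prettyref{thm:leb} reduces provability in $\ls{ACA_0^\omega}+\lpf{\mu}+\lpf{\leb}$ to provability in $\ls{ACA_0^\omega}$, which is conservative over $\ls{ACA_0}$ and hence over \ls{PA}. The lengthy sketch of a proof of \prettyref{thm:leb} itself is superfluous for this corollary (that theorem may simply be cited as established), so it does not affect correctness here, though as a proof plan it diverges from the paper's actual argument in some respects---notably it omits the elimination-of-extensionality step that the paper requires before the functional interpretation and the term-normalization/independent-replacement machinery can be applied.
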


We also have a program extraction result corresponding to \prettyref{thm:leb}.
\begin{theorem}[Program extraction for \lpf{\leb}]\label{thm:lebex}
  Suppose that
  \begin{equation}\label{eq:lebex}
  \ls{ACA_0^\omega} + \lpf{\mu} + \lpf{\leb} \vdash \Forall{f^1} \Exists{x^0} \lf{A}_\qf(f,x)
  \end{equation}
  for a quantifier-free formula $\lf{A}_\qf$ not containing $\leb$, then one can extract a term $t\in T_0[\mu]$ such that
  \[
  \Forall{f} \lf{A}_\qf(f,t(f))
  .\]
  If $\lf{A}_\qf$ additionally does not contain $\mu$ the term $t$ can be chosen to be in $T$.
\end{theorem}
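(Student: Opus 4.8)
The plan is to combine G\"odel's functional (``Dialectica'') interpretation with a normalization step that removes the measure constant $\leb$ from the extracted realizer, following the method of \cite{KK12}. I would first pass from the classical derivation \eqref{eq:lebex} to an intuitionistic one via the negative translation and then apply the functional interpretation. The base theory $\ls{ACA_0^\omega}+\lpf{\mu}$ is interpreted into the term system $T_0[\mu]$, arithmetical comprehension being witnessed uniformly by $\mu$ (cf.\ \prettyref{rem:mucons}), while $\leb$ is carried along as a Skolem constant of type $3$. As the conclusion $\Forall{f^1}\Exists{x^0}\lf{A}_\qf(f,x)$ has a quantifier-free matrix, it is its own functional interpretation, so soundness yields a term $t\in T_0[\mu,\leb]$ together with a quantifier-free verification of $\lf{A}_\qf(f,t(f))$ that invokes the defining properties of $\leb$ only through the finitely many Herbrand instances demanded by $t$. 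The task is thereby reduced to eliminating $\leb$ from $t$.

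For the elimination I would normalize $t$ as a term of $T_0[\mu,\leb]$, treating $\leb$ as an opaque oracle of type $3$. The structural fact to establish is that, on any fixed input $f$, the normal form evaluates $\leb$ at only finitely many type-$2$ arguments, each presented by an explicit subterm. On the finite Boolean algebra generated by these sets and the basic open sets that occur, the measure axioms reduce to finitely many constraints — non-negativity, finite additivity, the prescribed values $2^{-\lth(s)}$ on basic open sets, and the coding-compatibility identities — while $\sigma$-additivity enters only through finite partial sums, up to the precision the computation requests. This finite constraint system is feasible, the measure supplied by $\lpf{\leb}$ being a solution. Producing such an approximate measure $\tilde{\leb}$ by a term of $T_0[\mu]$ and substituting it for $\leb$ keeps the finitely many verification instances intact, so $t[\tilde{\leb}]\in T_0[\mu]$ still satisfies $\Forall{f}\,\lf{A}_\qf(f,t[\tilde{\leb}](f))$. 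If $\lf{A}_\qf$ is moreover $\mu$-free, then $\mu$ is removable by \prettyref{rem:mucons} and the functional interpretation of the resulting \ls{PA}-proof places the realizer in full $T$.

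The principal obstacle is the interaction with $\sigma$-additivity: its functional interpretation requests a rate of convergence for $\sum_i \leb(X'_i)$, and this rate is not majorizable, so a bare monotone functional interpretation cannot absorb $\leb$ into a bound — the normalization is genuinely needed. Two further points make the elimination delicate. First, the finitely many sets at which $\leb$ is queried may themselves be built using $\leb$, so one cannot simply tabulate oracle answers; the argument must proceed by an inside-out normalization that constructs $\tilde{\leb}$ in step with resolving the queries, and that keeps everything uniform in $f$ so that $\lambda f.\,t[\tilde{\leb}](f)$ is a single term. Second, realizing the approximate measure requires solving the finite consistency problem by a term of $T_0[\mu]$ even though deciding which Boolean combinations of the queried sets are empty is not $\mu$-decidable; carrying out this construction — so that $\tilde{\leb}$ meets the axioms precisely at the self-referential Herbrand points that arise — is the crux of the proof.
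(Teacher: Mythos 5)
Your high-level skeleton (negative translation plus functional interpretation, then a \cite{KK12}-style normalization so that $\leb$ occurs only as $\leb(t_0[f^1])$, then elimination of $\leb$ from the realizer) matches the paper. But there is a genuine gap exactly at the point you yourself call ``the crux.'' Your plan is to replace $\leb$ by a solution of a finite constraint system on the Boolean algebra generated by the queried sets, and you concede that you do not know how to produce such a solution by a term of $T_0[\mu]$ (emptiness of the queried sets is not $\mu$-decidable, and the queries are self-referential). The paper resolves this with a concrete lemma that your proposal lacks, \prettyref{lem:meas1}: after normalization the replacements are carried out \emph{innermost first}, so each argument of $\leb$ is a term of $T_0[\mu]$ and hence codes an \emph{arithmetical} set; for arithmetical sets the value of the \emph{actual} Lebesgue measure is definable by a term of $T_0[\mu]$ (a Tanaka/Sacks-type fact, via uniform continuity for $T_0$-terms and limits over the arithmetical hierarchy for $T_0[\mu]$-terms), and likewise a selector picking a member of any such set of positive measure. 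Thus no abstract feasibility problem has to be solved: one substitutes the true measure values, and the substituted axiom instances become provable. Your observation that the measure supplied by $\lpf{\leb}$ is a feasible solution gives only \emph{existence} of suitable values, not their \emph{term-definability}, and term-definability is the entire content of the theorem.

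Two further ingredients of the paper's proof are missing from your plan. First, elimination of extensionality: the base system is the extensional $\EPAw$, for which the Dialectica interpretation is not sound, so the paper first passes to the neutral system via \prettyref{cor:extelim} (checking $\lpf{\leb} \IMPL \lpf{\leb}_e$); moreover, your claim that the finitely many verification instances ``stay intact'' under substitution is precisely \prettyref{pro:cutderiv}, which holds because $\qfpre\NPAw$ cannot see that different occurrences of $\leb$ denote the same functional and hence allows them to be replaced independently. Second, the axiom $\lpf{\leb}$ is not purely universal with quantifier-free matrix: the clause $(\Forall{Y^1} X(Y)\neq_0 0) \IMPL \leb(X)=_\Real 0$ has an inner type-$1$ quantifier, which must be Skolemized via \lp[QF]{AC} into a selection functional $\upepsilon^3$ before the interpretation; $\upepsilon$ must then also be eliminated, which is part~(iii) of \prettyref{lem:meas1} and is exactly how the non-emptiness issue you worry about is handled. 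Your sketch of the $\mu$-free case (remove $\mu$ by \prettyref{rem:mucons}, re-extract) is in the right direction, though the paper's actual route extracts from the resulting $\ls{ACA_0^\omega}$-derivation a term in $T_0[B_{0,1}]$ (bar recursion of lowest type) and then uses Kohlenbach's degree-$2$ elimination to land in $T$, rather than a direct extraction from a ``\ls{PA}-proof.''
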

\begin{remark}
  The proof will show that if one has \eqref{eq:lebex} for an $\lf{A}_\qf$ containing $\leb$ one can find a formula, equivalent over 
$\ls{ACA_0^\omega} + \lpf{\mu} + \lpf{\leb}$, which does not contain $\leb$. Thus \prettyref{thm:lebex} is in this sense also applicable in this case.
\end{remark}

We will now sketch the proof of \prettyref{thm:leb}. The full proof of Theorems \ref{thm:leb} and \ref{thm:lebex} will be given in \prettyref{sec:proof} after auxiliary results haven been stated and proven.

\begin{proof}[Sketch of proof of \prettyref{thm:leb}]
  Suppose $\ls{ACA_0^\omega} + \lpf{\mu} + \lpf{\leb}$ proves a $\Pi^1_2$\nobreakdash-statement $\Forall{f}\Exists{g} \lf{A}(f,g)$.
  \begin{enumerate}[label=\arabic*.]
  \item Using the functional interpretation in combination with the elimination of extensionality one obtains a term $t_g\in T_0[\mu,\leb]$ such that 
    \begin{equation}
      \label{eq:A}
      \lf{A}(f,t_g(f))
      .
    \end{equation}
    (See \prettyref{cor:extelim} and \prettyref{thm:ndint}.)
    The statement \eqref{eq:A} will be verifiable in a neutral (with respect to extensionality) and (so-called) quantifier-free system.
    See Sections~\ref{sec:elimex} and \ref{sec:nd} for details.
  \item The term $t_g$ will be normalized in such a way that each occurrence of $\leb$ is of the form
    \begin{equation}
      \label{eq:leb}
      \leb(t_0[\tilde{f}^1])
    \end{equation}
    where $\tilde{f}$ is the only free variable of $t_0$ and $\lambda \tilde{f}.\, t_0[f^1] \in T_0[\mu,\leb]$. (See \prettyref{thm:cut3}.)
  \item By the special properties of the neutral and quantifier-free system verifying \eqref{eq:A} we can find a derivation of this statement where each occurrence in the derivation of $\leb$ is of the form \eqref{eq:leb}. (See \prettyref{pro:cutderiv}.)
  \item The terms $t_0$ to which $\leb$ is applied to in \eqref{eq:leb} code arithmetical sets. Therefore the value of \eqref{eq:leb} can be calculated in $\ls{ACA_0^\omega} + \lpf{\mu}$. See \prettyref{lem:meas1}. By \prettyref{pro:cutderiv} the different occurrences of $\leb$ (as seen from the quantifier-free system) can be replaced independently with this calculation. This yields then a proof of $\lf{A}$ without using $\leb$.\qedhere
  \end{enumerate}
\end{proof} 

\section{Construction measures on $T_0$ and $T_0[\mu]$}\label{sec:con}

\begin{lemma}\label{lem:meas1}
  \begin{enumerate}
  \item\label{enum:meas1:1} Let $t[f^1] \in T$ such that for each $f$  it is coding a set $X\subseteq 2^\Nat$. A function satisfying the axioms for $\lambda f^1\! .\, \leb(t[f])$ is definable using a term $t'(f) \in T$.
  \item\label{enum:meas1:2} The statement of \ref{enum:meas1:1} is also true with $T$ replaced by $T_0$ and $T_0[\mu]$.
  \item\label{enum:meas1:3} Let $\upepsilon(X)$ be a function selecting an element of a set $X$ if $\leb(X) >_\Real 0$, i.e.
    \[
    \Forall{X\subseteq 2^\Nat} \left(\leb(X) >_\Real 0 \IMPL X(\upepsilon(X)) = 0\right)
    .\]
    For each term $t[f^1]$ in $T$ resp.~$T_0,T_0[\mu]$ there is a term $t'[f^1]$ in $T$ resp.~$T_0,T_0[\mu]$ which choses a value for $\upepsilon(t[f])$, consistent with the above axiom.
    In other words, if $t[f^1]$ codes a set of positive measure then $t'[f^1]\in t[f^1]$.
  \end{enumerate}
\end{lemma}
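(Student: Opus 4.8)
The plan is to reduce the evaluation of $\leb(t[f])$ to extracting, uniformly in $f$ and \emph{inside the relevant term system}, enough information about the set $X_f\subseteq 2^\Nat$ coded by $t[f]$ to compute its measure. I split into two regimes according to whether $\mu$ occurs.

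If $t[f]$ lies in $T$ or $T_0$, then $\lambda f.\,t[f]$ is a continuous functional, so for each $f$ the characteristic function $g\mapsto\sg(t[f](g))$ is continuous on $2^\Nat$ and $X_f$ is clopen. Here I would first invoke the standard fact that type-$2$ functionals definable in $T$ (resp.\ $T_0$) are continuous with a modulus of continuity definable in the same system, and then \emph{majorize} that modulus: since closed terms of $T$ and $T_0$ admit majorants in the same system (Howard), and the constant function $\mathbf 1=\lambda n.1$ dominates every element of $2^\Nat$, evaluating the majorant at $\mathbf 1$ produces a term $N[f]$ that is a modulus of \emph{uniform} continuity of $t[f]$ on $2^\Nat$. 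Then $t[f]$ is constant on every cylinder $[s]$ with $\lth(s)=N[f]$, so
\[
t'(f) := 2^{-N[f]}\cdot\bigl|\{\, s\in 2^{N[f]} : t[f]\text{ vanishes on }[s] \,\}\bigr|
\]
satisfies $t'(f)=_\Real\leb(X_f)$ and is a term of the same system; the measure axioms for $\lambda f.\,t'(f)$ then follow from correctness of the modulus. This settles part~(i), the $T$- and $T_0$-part of part~(ii), and the clopen part of part~(iii), where for~(iii) one returns the leftmost $s\in 2^{N[f]}$ on which $t[f]$ vanishes, extended by zeros, which lies in $X_f$ whenever $\leb(X_f)>_\Real 0$.

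When $t[f]\in T_0[\mu]$ the functional may be discontinuous, so $X_f$ is only guaranteed to be arithmetical, and the majorization argument is unavailable because $\mu$ is not majorizable. Instead I would exploit that $\mu$ computes every finite Turing jump and that the arithmetical complexity of the predicate $t[f](g)=_0 0$ is bounded, at the meta-level, by the nesting of $\mu$ in $t$. Proceeding by induction on this bounded complexity: at the base ($\Delta^0_1$) the set is clopen and handled as above; at a countable-union (resp.\ intersection) step I use $\sigma$-additivity to write $\leb(X_f)$ as the limit of the measures of finite subunions, each inductively computable, and use $\mu$ to supply a Cauchy modulus for this monotone limit (deciding, with the jump, the stage at which the partial measure is within $2^{-k}$ of its supremum). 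This yields a term $t'(f)\in T_0[\mu]$ with $t'(f)=_\Real\leb(X_f)$, completing part~(ii).

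For part~(iii) in the $T_0[\mu]$ case the difficulty is that a positive-measure arithmetical set need not be closed, so a naive measure-guided descent lands only in $\overline{X_f}$. I would first pass, using inner regularity (itself $\mu$-definable via the measure computation above), to a closed subset $C_f\subseteq X_f$ with $\leb(C_f)>_\Real 0$, and then select the leftmost point of $C_f$ by descending into whichever child cylinder retains positive measure, a choice decidable with $\mu$ since $\leb(C_f\cap[s])$ is computable; as $C_f$ is closed the limiting point lies in $C_f\subseteq X_f$. I expect the main obstacle to be exactly the passage from ``the set is coded by a term'' to ``a uniform modulus, or the measure, is computed by a term of the same weak system'': the fan functional, which would hand over the uniform modulus for an arbitrary continuous input, is not available in any of $T$, $T_0$, $T_0[\mu]$, so one cannot treat $t[f]$ as a black box. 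The resolution is to argue at the meta-level on the syntactic structure of the given term, via majorizability in the $\mu$-free systems and via the bounded iterated-jump computation in $T_0[\mu]$, rather than through a single uniform functional.
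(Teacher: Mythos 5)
Your proposal follows essentially the same route as the paper's own proof: majorization of the modulus of continuity (evaluated at $\lambda n.1$) to get a uniform modulus and hence a clopen-set measure computation for $T$ and $T_0$; the arithmetical decomposition of $T_0[\mu]$-terms with measures obtained as $\mu$-definable limits of measures of finite unions/intersections; and, for the selection functional in the $T_0[\mu]$ case, passing to a closed (tree-coded) subset of positive measure and descending through positive-measure cylinders with $\mu$ --- where the paper cites Proposition~3.4 of \cite{ADR12} for exactly the inner-regularity step you invoke. The argument is correct and no essential idea is missing.
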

\begin{proof}
  \ref{enum:meas1:1}:
  The term $t[f]$ is provably in $T$ continuous, see \cite{bS71} and \cite[Proposition~3.57]{uK08}.
  Thus, there exists a modulus of continuity $\omega_{t[f]} \in T$.
  To $\omega_{t[f]}$ there exists a majorant $\omega^*_{t[f^M]}\in T$. 
  For details on majorants see \cite[Chapter~3]{uK08}. 
  (We use here that $f$ is only of type 1 and therefore has the generic majorant $f^M$.) Now $\omega^*_{t[f^M]}(\lambda x^0 \! .\, 1)$ is a modulus of uniform continuity. By the definition of $X$ this means that for two sets $Y_1,Y_2\subseteq \Nat$ which are equal on the interval $[0; \omega^*_{t[f^M]}(\lambda x^0 \! .\, 1)]$ we have $Y_1\in X\IFF Y_2 \in X$; 
  or in other words $Y_1\in X$ if{f} the basic open set  $[\overline{\chi_{Y_1}}\big(\omega^*_{t[f^M]}(\lambda x^0 \! .\, 1)\big)]$ is contained in $X$.
  Thus,
  \begin{equation}\label{eq:meas1def}
    \begin{split}
      \leb(X) &= 2^{-k} \cdot  \abs{\left\{ s\in 2^{k} \sizeMid [s]\subseteq X\right\}} \\
      &= 2^{-k} \cdot \abs{\left\{ s\in 2^{k} \sizeMid t[f](\lambda n .\, (s)_n) = 0 \right\}},
    \end{split}
  \end{equation}
  where $k:=\omega^*_{t[f^M]}(\lambda x^0 \!.\, 1)$. Now, it is straightforward to find a term describing $\leb(X)$.

  \noindent \ref{enum:meas1:2}:
  The statement for $T_0$ is clear. For $t[f]\in T_0[\mu]$, the function represented by $\lambda g^1 \!.\, t[f^1]g$ is arithmetic, see Theorem~8.3.4 and below in \cite{AF98}. In other words, the set $X$ is arithmetic and thus of the form
  \begin{equation}\label{eq:x}
    X= \bigcup_{n_1} \bigcap_{n_2} \bigcup_{n_3} \dots \bigcap_{n_i} X'_{n_1,\dots,n_i}
  \end{equation}
  for a sequence of sets $X'_{n_1,\dots,n_i}$ represented by a term in $T_0$. By \ref{enum:meas1:1} the value $\leb(X'_{n_1,\dots,n_i})$ is given by a term in $T_0$. Now,
  \[
  \leb(X) = \lim_{m_1\to\infty} \lim_{m_2\to\infty}  \cdots \lim_{m_i\to\infty} 
  \leb\big(\underbrace{\cup_{n_1<m_1} \cap_{n_2<m_2}  \dots \cap_{n_i<m_i} X'_{n_1,\dots,n_i}}_{X''_{m_1,m_2,\dots,m_i}}\big)
  \]
  Since $X''_{m_1,m_2,\dots,m_i}$ is the finite union and intersection of a set represented by a term in $T_0$ it is also represented in $T_0$. Therefore, by \ref{enum:meas1:1} the function $\lambda m_1,m_2,\dots,m_i .\, \leb(X''_{m_1,m_2,\dots,m_i})$ is given by a term in $T_0$. The limits can be defined using $\mu$ therefore the $\leb(X)$ is given by a term in $T_0[\mu]$.

  \noindent \ref{enum:meas1:3}: 
  First let $t$ be in $T$ or $T_0$. Then one can set in view of \eqref{eq:meas1def} the value $\upepsilon(X)$ to $\lambda x^0\! .\, (s)_x$ for an $s\in 2^k$ such that $[s]\subseteq X$ and to $\lambda x^0 \! .\, 0$ otherwise.

  Now let $t\in T_0[\mu]$. By the algorithm in the proof of Proposition 3.4 of \cite{ADR12} there exists a $\emptyset^{(k)}$-computable 0/1\nobreakdash-tree $T$ of measure $>\leb(X)/2$, such that the infinite branches $[T]$ are contained in $X$. Using $\mu$ one can easy find an infinite branch of $T$ which gives $\upepsilon(X)$.
\end{proof}
The points \ref{enum:meas1:1}, \ref{enum:meas1:2} of this lemma should be compared with the following well known results from descriptive set theory by  Tanaka \cite{hT67} and Sacks \cite{gS69}. For a represented space $\mathcal{X}$ and a $\Sigma^0_n$-set $P\subseteq \Nat^\Nat \times \mathcal{X}$ the following relation
\[
R(\alpha,r) :\equiv \leb(P_\alpha) > r \qquad \text{where }P_\alpha = \{ x\in X \mid (\alpha,x)\in P \}
\]
is also $\Sigma^0_n$. This lemma is just a translation of this result into our setting.

\section{Proof theory}\label{sec:pt}

\subsection{Elimination of extensionality}\label{sec:elimex}
To be able to use the functional interpretation and to replace $\leb$, we will need to eliminate the use of extensionality. For the functional interpretation it is sufficient to arrive at a weakly extensional system where the extensionality axioms are replaced by the so-called quantifier-free extensionality rule, see for instance \cite{uK08}. 
To replace $\leb$ however we will need a neutral system, where the extensionality axioms are deleted without any substitute. (Only the substitution of type $0$ objects remains extensional by the $=_0$-axioms.)

To stay consistent with the notation in \cite{uK08,KK12}, we will denote \ls{RCA_0^\omega} with $\EPAw+ \lp[QF]{AC^{1,0}}$, i.e., \EPAw is  $\ls{RCA_0^\omega}$ without quantifier-free choice. The $\lpfont{E\textup{-}}$ prefix indicates that the system is extensional.

The neutral variant \NPAw is defined like \EPAw with the exception that the defining equations of the functionals are given as substitution schemata
\begin{align*}
(\ls{SUB})&\colon \left\{
  \begin{aligned}
    t[\Pi xy] &=_0 t[x]  \\
    t[\Sigma xyz] &=_0 t[xz(yz)] \\
    t[R0yz] &=_0 t[y] \\
    t[R(Sx)yz] &=_0 t[z(Rxyz)x]
  \end{aligned}
\right. &\text{for all $t$ of type $0$,}
\intertext{and case-distinction functionals $(\Cond_\rho)_{\rho\in\T}$ for each type together with the substitution schemata}
(\lp{SUB_\Cond})&\colon \left\{
  \begin{aligned}
    t[\Cond_\rho(0^0 \!,x^\rho \!,y^\rho)] &=_0 t[ x] \\
    t[\Cond_\rho(Su,x^\rho \!,y^\rho)] &=_0 t[ y]
  \end{aligned}
  \right. 
  & \text{for all $t$ of type 0}
\end{align*}
are added. In the absence of (weak) extensionality these substitutions schemes are stronger than the defining equalities. Also, the $\Cond_\rho$ functional (for $\rho \neq 0$) cannot be defined without extensionality (or the recursor $R_\rho$ simulating it). Together with (weak) extensionality $\Cond_\rho$ can be defined from case-distinction on type $0$. Thus, \NPAw is a subsystem of \EPAw. See \cite[Section~2.3]{KK12}.

To define extensionality in the neutral system we define the extensional equality relation $=^e_\rho$ by induction on $\rho$:
\[
\left\{
  \begin{aligned}
    x=^e_0 y &:\equiv x=_0 y ,\\
    x=^e_{\tau\rho} y & :\equiv \Forall{u^\rho,v^\rho} \left( u=^e_\rho v \IMPL xu =^e_\tau xv \AND  xu =^e_\tau yv \right).
  \end{aligned}
\right.
\]
We say that an object $x^\rho$ is \emph{hereditarily extensional} if $x =^e_\rho x$. For details on this relation see \cite[Section~10.4]{uK08} and \cite{hL73} for a reference.
It is easy to see, that any object of degree $\le 1$ is extensional, see \cite[Lemma~10.41]{uK08}, and that each term $t[x_1,\dots,x_n]$ where $x_1,\dots,x_n$ are the only free variables is extensional if the variables are, i.e.,
\[
x_1 =^e x_1 \AND \dots \AND x_n=^e x_n \IMPL t[x_1,\dots,x_n] =^e  t[x_1,\dots,x_n]
,\]
see \cite[Lemma~10.42]{uK08}.

The \emph{elimination of extensionality translation} $\lf{A}_e$ of a formula $\lf{A}$ is given by relativizing all quantifiers to hereditarily extensional object, i.e.\@
\begin{enumerate*}[label=(\roman*)]
  \item $\lf{A}_e :\equiv \lf{A}$ for $\lf{A}$ prime,
  \item $(\lf{A} \mathrel{\Box}  \lf{B})_e :\equiv \lf{A}_e \mathrel{\Box}  \lf{B}_e$ for $\Box \in \{\land,\lor,\to\}$,
  \item $(\Exists{ x^\rho} \lf{A})_e :\equiv \Exists{x^\rho} ( x=^e_\rho x \AND \lf{A}_e)$, 
  \item $(\Forall{ x^\rho} \lf{A})_e :\equiv \Forall{x^\rho} ( x=^e_\rho x \IMPL \lf{A}_e)$.
\end{enumerate*}

\begin{proposition}[Elimination of extensionality translation, \cite{hL73}, \cite{uK08}]\label{pro:extelim}
  Let $\lf{A}$ be a sentence of $\mathcal{L}(\EPAw)$.
  If 
  \begin{align*}
    \EPAw + \lp[QF]{AC^{1,0}} &\vdash \lf{A}
    \shortintertext{then }
    \NPAw + \lp[QF]{AC^{1,0}} & \vdash \lf{A}_e
    .
  \end{align*}
\end{proposition}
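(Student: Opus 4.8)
The plan is to induct on the length of the given derivation in $\EPAw + \lp[QF]{AC^{1,0}}$, verifying for every axiom that its $e$-translation is provable in $\NPAw + \lp[QF]{AC^{1,0}}$ and that every inference rule is compatible with $(\cdot)_e$. (For the open formulas occurring in the derivation one carries along the hypotheses $x =^e_\rho x$ for the free variables $x$, specialising to the sentence $\lf{A}$ at the end.) Since $(\cdot)_e$ is exactly the relativisation of all quantifiers to the hereditarily extensional objects, the rules and axioms of pure predicate logic transfer uniformly, provided this class is inhabited at every type and the terms substituted in the $\forall$-elimination and $\exists$-introduction rules land in it. This is guaranteed by the cited facts that every object of degree $\le 1$ is hereditarily extensional --- so all type-$0$ and type-$1$ objects qualify automatically --- and that a term is hereditarily extensional whenever its free variables are (\cite[Lemmas~10.41,~10.42]{uK08}); in particular the constants $\Pi$, $\Sigma$, $R_\rho$ and $\Cond_\rho$ of $\NPAw$ are hereditarily extensional.

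Next I would dispose of the routine non-logical axioms. The $=_0$-axioms are untouched, as $=^e_0$ is literally $=_0$. The defining equations of the combinators hold in $\NPAw$ even unrelativised: instantiating the schema $(\ls{SUB})$ at the type-$0$ context $t[w] :\equiv w\,\vec z$ gives $\Pi x y\,\vec z =_0 x\,\vec z$, and likewise for $\Sigma$ and $R_\rho$, so the higher-type equations unfold to provable type-$0$ statements; the schema $(\lp{SUB_\Cond})$ handles $\Cond_\rho$ in the same way. The $e$-translation of $\Sigma^0_1$-induction reduces to $\Sigma^0_1$-induction in $\NPAw$, since the relativisation of its single type-$0$ bound quantifier is vacuous and the $e$-translation of a $\Sigma^0_1$ formula is the formula itself.

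The first substantial point is the extensionality axioms $(\lp{E_{\rho,\tau}})$, which live in $\EPAw$ but have been deleted in $\NPAw$; here the relativisation does its work. By a straightforward induction on $\rho$ the abbreviation $=_\rho$ agrees with $=^e_\rho$ on hereditarily extensional arguments, so $(\lp{E_{\rho,\tau}})_e$ asserts, for hereditarily extensional $z,x,y$, that $x =^e_\rho y$ implies $zx =^e_\tau zy$ --- which is immediate from $z =^e z$, the very statement that $z$ respects $=^e$. Thus extensionality becomes provable after translation, which is the whole reason for passing to $(\cdot)_e$.

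The genuine obstacle is $\lp[QF]{AC^{1,0}}$, whose $e$-translation demands a \emph{hereditarily extensional} witness $f^{0(1)}$, i.e.\ one with $x =_1 x' \to fx =_0 fx'$; such extensionality is not available for an arbitrary choice function once the extensionality axioms are gone. I would obtain it by passing to the least witness. As bounded search is primitive recursive, the predicate $B(x,y) :\equiv \lf{A}_\qf(x,y) \land \Forall{z<y} \lnot\lf{A}_\qf(x,z)$ is quantifier-free, and $\Forall{x}\Exists{y} B(x,y)$ follows from the antecedent $\Forall{x}\Exists{y} \lf{A}_\qf(x,y)$ by the least-number principle, itself a consequence of $\Sigma^0_1$-induction. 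Applying $\lp[QF]{AC^{1,0}}$ in $\NPAw$ to $B$ yields an $f$ with $\Forall{x} B(x,fx)$. Since $B$ is built from $x,y$ and the hereditarily extensional constants, \cite[Lemma~10.42]{uK08} shows that $B$ respects $=_1$ in its first argument; combined with the uniqueness of the least witness this forces $x =_1 x' \to fx =_0 fx'$, i.e.\ $f =^e f$. This supplies the relativised witness and closes the induction.
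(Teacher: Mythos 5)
Your proposal is correct and is essentially the argument the paper relies on: the paper's proof is a citation to Proposition~10.45 and Section~11.5 of \cite{uK08}, and that cited proof proceeds exactly as you do --- induction on the derivation, relativisation to the hereditarily extensional functionals (using the lemmas that degree $\le 1$ objects and all terms with hereditarily extensional free variables are hereditarily extensional), with the axioms $(\lp{E_{\rho,\tau}})$ becoming provable under the translation and $\lp[QF]{AC^{1,0}}$ handled by passing to the least witness so that uniqueness forces the choice function to be extensional. No gap to report; your reconstruction, including the adaptation to the neutral system via the $(\ls{SUB})$ and $(\lp{SUB_\Cond})$ schemata, matches the cited argument.
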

\begin{proof}
  See Proposition~10.45, Remark~11.46.1 and the whole Section~11.5 of \cite{uK08}.
\end{proof}
We may assume that $\mu$ satisfies 
\[
\mu(f) =
\begin{cases}
  \min \{ x \mid f(x) = 0 \} & \text{if such an $x$ exists,} \\
  0 & \text{otherwise,}
\end{cases}
\]
since such a functional is definable using $\mu$ satisfying only \eqref{eq:mu}. 
Then the values of $\mu(f)$ only depends on the values of $f$ and thus $\mu$ is hereditarily extensional. 
This means that one can add \lpf{\mu} to the systems in \prettyref{pro:extelim}.

For \lpf{\leb} we first observe that \lpf{\leb} proves over \NPAw that $\leb$ is extensional, since for two sets $X_1,X_2$ with $\Forall{f^1} X_1(f) =_0 X_2(f)$ we have that the characteristic function of $X_1\setminus X_2$ and $X_2\setminus X_1$ are never equal to $0$, thus by the first line of \lpf{\leb}
\[
\leb(X_1\setminus X_2) =_\Real \leb(X_2\setminus X_1) =_\Real 0
.\]
Now by the second line of the axiom \lpf{\leb} we have that
\[
\leb(X_1) + \leb(X_2\setminus X_1) =_{\Real} \leb(X_2) + \leb(X_1\setminus X_2)
\]
which means that $\leb(X_1) =_{\Real} \leb(X_2)$.
Thus we get together with the fact that degree $\le 1$ object are extensional that
\[
\lpf{\leb}_e\IFF \left\{ 
  \begin{aligned}
    \Exists{\leb^3} \Big( & \hphantom{\!\!\!\AND\,} \Forall{X^2}\!\left(X =^e X \IMPL \left(\leb(X) \ge_\Real 0 \AND ((\Forall{Y^1}\! X(Y) \neq_0 0) \IMPL \leb(X) =_{\Real} 0) \right)\right) \\
    &{\!\!\!\AND\,} \Forall{(X_i)_i^2} \left((X_i)_i =^e (X_i)_i \IMPL  \leb\left(\bigcup_{i\in\Nat} X'_i\right) =_\Real \sum_{i\in\Nat} \leb(X'_i)\right) \\
    &{\!\!\!\AND\,} \Forall{s^0}\, \leb([s])=_\Real 2^{-\lth(s)} \\
    &{\!\!\!\AND\,} \Forall{X^2} 
    \begin{multlined}[t]
      (X =^e X \\
      \IMPL \leb(\lambda f^1 \!.\, \sg(Xf)) =_\Real \leb(\lambda f^1 \!.\, X(\lambda x^0 \!.\, \sg(fx))) =_\Real \leb(X)) \smash{\Big)}
    \end{multlined}
  \end{aligned}
  \right.
\]
Since that relativisation is only applied to $\forall$-quantifiers, we get that $\lpf{\leb}\IMPL \lpf{\leb}_e$.

Combining these observations with \prettyref{pro:extelim} we obtain the following corollary.
\begin{corollary}\label{cor:extelim}
  Let $\lf{A}$ be a sentence of $\mathcal{L}(\EPAw + \lpf{\mu} + \lpf{\leb})$. If
  \begin{align*}
    \EPAw + \lp[QF]{AC^{1,0}} + \lpf{\mu} + \lpf{\leb} &\vdash \lf{A}
    \shortintertext{then }
    \NPAw + \lp[QF]{AC^{1,0}} + \lpf{\mu} + \lpf{\leb} & \vdash \lf{A}_e
    .
  \end{align*}
\end{corollary}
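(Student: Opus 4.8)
The plan is to assemble \prettyref{cor:extelim} directly from \prettyref{pro:extelim} together with the two paragraphs immediately preceding the corollary, which do all the real work. The target statement is essentially a ``plug-in'' result: \prettyref{pro:extelim} already gives the elimination-of-extensionality translation for \EPAw, and the preceding discussion verifies that the two new ingredients---Feferman's $\mu$ and the Lebesgue functional $\leb$---behave well under this translation. So the proof is a matter of checking that adding \lpf{\mu} and \lpf{\leb} as hypotheses on the \EPAw side lets us add their translated forms on the \NPAw side, and that for these two particular axioms the translated forms are provable.

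First I would recall that \prettyref{pro:extelim} transforms any \EPAw-derivation of a sentence $\lf{A}$ into a \NPAw-derivation of $\lf{A}_e$, where the $(\cdot)_e$ translation relativizes all quantifiers to hereditarily extensional objects. The correct way to incorporate extra nonlogical axioms is to treat the derivation of $\lf{A}$ from $\EPAw + \lp[QF]{AC^{1,0}} + \lpf{\mu} + \lpf{\leb}$ as a derivation in \EPAw of the implication
\[
\lpf{\mu} \AND \lpf{\leb} \IMPL \lf{A},
\]
apply \prettyref{pro:extelim} to obtain a \NPAw-proof of $(\lpf{\mu})_e \AND (\lpf{\leb})_e \IMPL \lf{A}_e$, and then discharge the two translated hypotheses by proving each of them in $\NPAw + \lp[QF]{AC^{1,0}} + \lpf{\mu} + \lpf{\leb}$. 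For $\mu$ this is immediate from the observation made just after \prettyref{pro:extelim}: after replacing $\mu$ by its least-witness normalization, $\mu$ becomes hereditarily extensional, so $\lpf{\mu}$ implies $(\lpf{\mu})_e$. For $\leb$ the preceding paragraph shows that \lpf{\leb} proves over \NPAw that $\leb$ is extensional---using $\sigma$-additivity and the vanishing of $\leb$ on empty-charactered sets to deduce $\leb(X_1) =_\Real \leb(X_2)$ whenever $X_1$ and $X_2$ have pointwise-equal characteristic functions---and hence that $\lpf{\leb} \IMPL \lpf{\leb}_e$, since the relativization only weakens the $\forall$-quantifiers. With both translated axioms available on the \NPAw side, modus ponens yields $\lf{A}_e$, which is exactly the conclusion.

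The step I expect to require the most care is the bookkeeping around which axioms are already hereditarily extensional versus which only have their extensionality \emph{proved} from the axiom itself. For $\mu$ the functional is genuinely extensional once normalized, so $(\lpf{\mu})_e$ is just $\lpf{\mu}$ up to the harmless relativization of its internal quantifiers. For $\leb$, by contrast, $\leb$ is a type-$3$ object and is not extensional a priori; what the displayed computation establishes is that under the hypothesis \lpf{\leb} the functional $\leb$ respects $=^e$ on its type-$2$ arguments. One must therefore be slightly careful that the explicitly relativized form $\lpf{\leb}_e$ written out before the corollary is exactly the $(\cdot)_e$-translation of the axiom, and that the implication $\lpf{\leb} \IMPL \lpf{\leb}_e$ holds in the neutral system---which it does because relativizing the universally quantified $X$ to $X =^e X$ only restricts the range of the quantifier and hence follows from the unrelativized statement. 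Once this correspondence is confirmed, the corollary follows by combining \prettyref{pro:extelim} with these two implications, as indicated.
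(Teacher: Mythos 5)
Your proposal is correct and matches the paper's own argument: the corollary is obtained by combining \prettyref{pro:extelim} with exactly the two observations preceding it---that the least-witness normalization of $\mu$ is hereditarily extensional (so $\lpf{\mu}$ yields $(\lpf{\mu})_e$), and that $\lpf{\leb}$ proves the extensionality of $\leb$ and hence $\lpf{\leb} \IMPL \lpf{\leb}_e$ since the relativization only weakens the $\forall$-quantifiers. Your explicit routing through the deduction theorem (translate the implication, then discharge the translated hypotheses) is just the formal rendering of the paper's ``combining these observations,'' and is the same device the paper itself uses later in the proof of \prettyref{thm:leb}.
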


\subsection{Functional interpretation}\label{sec:nd}

In order to be able to later replace the occurrences of $\leb$ independently we need a stronger formulation of the functional interpretation, which states explicitly that it translates the proof into the quantifier-free fragment.

The quantifier-free fragment $\qfpre\NPAw$ of \NPAw is obtained as follows:
\begin{itemize}
\item The quantifier-rules and -axioms are dropped from logic.
\item For all axioms of the form $\lf{A}_\qf(x_1^{\rho_1},\dots,x_n^{\rho_n})$, where $\lf{A}_\qf$ is quantifier-free, the following axioms are added to the system:
  \[
  \lf{A}_\qf(t_1^{\rho_1},\dots,t_n^{\rho_n})
  ,\]
  where $t_i$ are arbitrary terms.
  
\item The induction schema is replaced by the (quantifier-free) induction rule:
  \[ 
  \frac{\lf{A}_\qf(0^0),\quad \lf{A}_\qf(x^0)\IMPL \lf{A}_\qf(Sx^0)}{\lf{A}_\qf(t^0)}
  ,\]
  where $\lf{A}_\qf$ is quantifier-free, $x$ does not occur free in the assumption and $t$ is an arbitrary term.
\end{itemize}
The system $\qfpre\NPAw$ contains only prime formulas of the form
\[
t_0 =_0 t_1
,\]
where $t_0,t_1$ are terms in \NPAw. Formulas are logical combinations of these predicates.
Obviously,  $\qfpre\NPAw$ is a subsystem of \NPAw.
The important property that our proof will rely on is that in $\qfpre\NPAw$ variables of type $>0$ cannot be instantiated by the axioms.

Therefore, one can assume that a derivation of a formula $\lf{A}$ contains besides type $0$ variables only the variables already occurring in $\lf{A}$, see \cite[Lemma~4]{KK12}.
For a detailed discussion of on quantifier-free systems we also refer the reader to \cite[1.6.5]{aT73} (We use here the variant of the systems described Remark~1.5.8.).

\begin{theorem}[Functional interpretation, \cite{kG58}, {\cite[8.5.2, 8.6.2]{sF77}}]\label{thm:ndint}
  If
  \begin{align*}
    \NPAw + \lp[QF]{AC} &\vdash \Forall{x} \Exists{y} \lf{A}_\qf(x,y),
    \shortintertext{then one can extract a closed term $t$ from the derivation, such that}
    \qfpre\NPAw & \vdash \lf{A}_\qf(x,tx).
  \end{align*}

  The same statement holds if to both system \lpf{\mu} is added.
\end{theorem}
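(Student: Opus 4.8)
The plan is to establish this as the soundness theorem for the composition of a negative translation with G\"odel's Dialectica interpretation (the ``ND-interpretation''), proved by induction on the given derivation; cf.\ \cite{kG58,sF77} and \cite[Chapters~8--10]{uK08}. Since $\NPAw + \lp[QF]{AC}$ is classical, I would first apply a negative translation (say Kuroda's) to the assumed derivation. As $\lf{A}_\qf$ is quantifier-free and hence decidable, its negative translation is provably equivalent to itself, so $\Forall{x}\Exists{y}\lf{A}_\qf(x,y)$ translates to $\Forall{x}\neg\neg\Exists{y}\lf{A}_\qf(x,y)$, derivable in the intuitionistic variant of the neutral system together with \lp[QF]{AC}. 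Here one checks that the non-logical axioms of \NPAw---in particular the substitution schemata $(\ls{SUB})$, $(\lp{SUB_\Cond})$ and the $=_0$-axioms---survive the translation, which is routine since they are quantifier-free.

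Next I would run the Dialectica interpretation on this intuitionistic derivation, assigning to each formula $\lf{A}$ its interpretation $\Exists{\underline{x}}\Forall{\underline{y}}\lf{A}_D(\underline{x},\underline{y})$ with $\lf{A}_D$ quantifier-free, and proving by induction on the derivation that one extracts closed terms realizing the existential block, verifiably in $\qfpre\NPAw$. The interpretation of the logical rules is standard; contraction requires the matrices $\lf{A}_D$ to be decidable, which holds because every prime formula of $\qfpre\NPAw$ is a type-$0$ equation and case distinctions can be performed with the functionals $\Cond_\rho$. Because \NPAw is neutral there is no extensionality axiom to interpret---precisely the axiom that obstructs Dialectica---so this step is in fact easier than in the weakly extensional setting. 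The principles \lp[QF]{AC}, Markov's principle and independence-of-premise are interpreted by trivial (identity) terms. The one point needing care is the induction: since \NPAw carries merely $\Sigma^0_1$-induction, the witnessing datum of the induction formula is of type $0$, so its interpretation is obtained by primitive recursion at type $0$ alone, i.e.\ using only the recursor $R_0$; hence the extracted terms stay within the term language of $\qfpre\NPAw$.

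Combining the two translations, the Dialectica interpretation of $\Forall{x}\neg\neg\Exists{y}\lf{A}_\qf(x,y)$ then produces a closed term $t$ with $\qfpre\NPAw\vdash \lf{A}_\qf(x,tx)$, the double negation being absorbed using the interpreted Markov principle together with the decidability of $\lf{A}_\qf$. For the extension by \lpf{\mu} I would add $\mu$ as a constant both to the term language and as an axiom. Since $\mu$ is of type $2$ and, as arranged above, hereditarily extensional, its defining axiom \eqref{eq:mu} can be written in a form whose Dialectica interpretation is witnessed by $\mu$ itself; thus $\mu$ is ``self-interpreting'' and threads through the induction unchanged, at the cost of extracting the term in $T_0[\mu]$ rather than $T_0$.

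I expect the main obstacle to be bookkeeping rather than conceptual: verifying that the neutral substitution schemata $(\ls{SUB})$, $(\lp{SUB_\Cond})$ and the restricted recursor $R_0$ behave correctly under the interpretation---above all confirming that $\Sigma^0_1$-induction never forces recursion above type $0$, so that the extracted term genuinely remains in the term system of $\qfpre\NPAw$ (resp.\ its extension by $\mu$), as is implicit in the statement and needed for \prettyref{thm:lebex}.
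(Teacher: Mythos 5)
Your proposal is correct and follows essentially the same route as the paper, which obtains \prettyref{thm:ndint} by composing a negative translation with G\"odel's Dialectica interpretation, citing Corollary~10.54 and Chapter~10 of \cite{uK08} with the observation that weak extensionality can simply be dropped in the neutral setting, and \cite[8.2.2]{AF98} for the self-interpreting treatment of $\mu$. Your additional points---decidability of prime formulas via $\Cond$ for contraction, $\Sigma^0_1$-induction keeping recursion at $R_0$, and the trivial interpretation of \lp[QF]{AC} and Markov's principle---are exactly the bookkeeping the cited sources carry out.
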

This theorem is obtain by using a negative translation and G\"odel's Dialectica translation.
For a proof see Corollary~10.54 and the whole Chapter~10 of \cite{uK08} and note that one can simply drop weak extensionality. For the interpretation of $\mu$ see also \cite[8.2.2]{AF98}.

\subsection{Term normalization}\label{sec:cut}

In this section we formulate the term normalization results we will be using. \prettyref{thm:cut3} below shows that one can normalize each term containing a type $3$ parameter $G$---think of this being $\leb$---such that $G$ occurs only in the form $G(\tilde{t}_0[f^1])$. For $\leb$ this means that $\leb$ is only used to introduce functions, which can be built according to \prettyref{lem:meas1}.
\prettyref{pro:cutderiv} below shows that one can do this normalization in a whole derivation in $\qfpre\NPAw$ and is allowed to independently substitute different occurrences of $\leb$.

\begin{theorem}[term-normalization for degree $3$]\label{thm:cut3}
  Let $G$ be a constant of type~$3$.
  For every term $t^1\in T_0[(\Cond_\rho)_{\rho\in \T},G]$ there is provably in $\qfpre\NPAw$ a term $\tilde{t}\in T_0[\Cond_0,G]$ for which
  \[
  \Forall{x}\, tx =_0 \tilde{t}x
  \]
  and where
  every occurrence of a $G$ is of the form
  \[
  G(\tilde{t}_0[f^1])
  .\]
  where $\tilde{t}_0[f^1]$ contains only $f^1$ free.
\end{theorem}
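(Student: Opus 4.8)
The plan is to pass to the $\lambda$-calculus presentation of $T_0[(\Cond_\rho)_{\rho\in\T},G]$, reduce $tx$ (where $x^0$ is the only free variable) to a normal form, and read off its shape; the essential point is that the restriction to the type-$0$ recursor $R_0$—rather than full $T$—caps the type of every bound variable at $1$ and thereby makes the statement true. First I would invoke strong normalisation for the rewrite system consisting of $\beta$-reduction together with the rules for $R_0$ and the case functionals $\Cond_\rho(0,b,c)\rightarrow b$, $\Cond_\rho(Su,b,c)\rightarrow c$, where $G$ is treated as an uninterpreted constant with no reduction rule (Tait's computability method applies verbatim, since $\Cond_\rho$ is merely a definition-by-cases and $G$ only ever blocks reductions). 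Every reduction step corresponds to an instance of the substitution schemata $\ls{SUB}$ or $\lp{SUB_\Cond}$ applied inside a type-$0$ context, hence is a provable $=_0$-equation; as these schemata are closed under type-$0$ contexts, reducing $tx$ to a normal form $s$ already yields $tx =_0 s$ in $\qfpre\NPAw$.

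Next I would eliminate the higher-type case functionals. Any occurrence $\Cond_\rho(a,b,c)$ with $\rho\neq 0$ sits in a minimal type-$0$ context $C[\cdot]$, and I claim $C[\Cond_\rho(a,b,c)]=_0\Cond_0(a,C[b],C[c])$. This follows from the quantifier-free induction rule applied to the scrutinee $a$: writing $\lf{A}(k):\equiv C[\Cond_\rho(k,b,c)]=_0\Cond_0(k,C[b],C[c])$, both $\lf{A}(0)$ and $\lf{A}(Su)$ hold outright by $\lp{SUB_\Cond}$, and the rule then delivers $\lf{A}(a)$ for the given term $a$. Hoisting all case distinctions to type $0$ in this way—and renormalising, which terminates—replaces $t$ by a term of $T_0[\Cond_0,G]$ and, crucially, removes every binder carried by a branch of a higher-type $\Cond_\rho$.

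The structural heart is then an inspection of the $\beta R_0\Cond_0$-normal form. Its only free variable is $x^0$, and its only binders are the third argument of $R_0$ (of type $0(0)(0)$, binding type-$0$ variables) and the argument of $G$ (of type $0(0(0))$, binding one type-$1$ variable); hence no variable of type $\geq 2$ occurs in it. Consequently the argument of $G$, being of type $0(0(0))$ with no type-$2$ variable available to head a neutral term of that type, must be an abstraction, so every occurrence of $G$ has the form $G(\lambda g^1.M)$, and the free variables of $\lambda g^1.M$ lie among $x^0$ and the finitely many enclosing $R_0$- and $G$-bound variables, all of type $\leq 1$. I would fold these finitely many type-$0$ and type-$1$ parameters into a single fresh type-$1$ variable $f$ by a primitive-recursive coding definable in $T_0$, replacing each parameter by the corresponding $T_0$-projection applied to $f$. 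This produces a template $\tilde t_0[f^1]$ with $f^1$ as its only free variable, the actual argument of $G$ being the instance obtained by substituting the code of the surrounding parameters for $f$. Each coded type-$1$ parameter occurs only at a type-$0$ application and each coded type-$0$ parameter is itself of type $0$, so the projection equations fire as type-$0$ reductions, the instance reduces back to $\lambda g^1.M$, and context-closure of $\ls{SUB}$ at type $0$ gives equality of the two $G$-applications. Nested occurrences of $G$ are handled by carrying out this coding on the normal form recursively.

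The step I expect to be the main obstacle is precisely this last provability claim. Because $\NPAw$ carries no extensionality for objects of type $\geq 2$, I cannot argue that $G$ returns equal values on extensionally equal arguments; the coding must therefore be arranged so that the substituted argument of $G$ reduces \emph{syntactically} back to $\lambda g^1.M$ through provable type-$0$ equations, rather than merely agreeing with it extensionally. Apart from this, the only non-routine inputs are the strong-normalisation statement and the observation—special to $T_0$—that the absence of higher-type recursors bounds the types of all bound variables by $1$; once these are in place, the read-off of the normal form and its verification in $\qfpre\NPAw$ are routine.
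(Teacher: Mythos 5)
Your overall architecture mirrors the proof this theorem actually points to (\cite[Theorem~24]{KK12}): treat $G$ as an inert constant, normalize, push the higher-type $\Cond_\rho$ down to $\Cond_0$ by the quantifier-free induction rule (that sub-argument of yours is correct and is exactly the right use of $\lp{SUB_\Cond}$), and package the remaining parameters into one type-$1$ variable. But your very first step contains a genuine gap. The official syntax of $T_0$ and of $\qfpre\NPAw$ is combinatory: $\lambda$-abstraction is an abbreviation for bracket abstraction in $\Pi,\Sigma$, and the schemata $\ls{SUB}$, $\lp{SUB_\Cond}$ only license contracting a combinator redex that occurs as an \emph{actual subterm} of a type-$0$ term. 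A $\beta$-step performed under a binder---above all, inside the body $M$ of an argument $\lambda g^1.M$ of $G$---is not of this form: bracket abstraction interleaves $\Sigma$'s and $\Pi$'s through $M$, so the contracted redex is no longer a subterm of the combinatory translation of $\lambda g^1.M$, and no $\ls{SUB}$ instance applies. Worse, the corresponding equation $G(\lambda g.M) =_0 G(\lambda g.M')$ is in general \emph{unprovable} in $\qfpre\NPAw$: interpreting the system in a non-extensional model (e.g.\ the hereditarily recursive operations, with $G$ the functional returning the index of its argument) validates all axioms, yet $G$ takes different values on the two distinct combinatory normal terms. This unprovability is not a technicality one can engineer around; it is the very feature the method lives on, namely that $\qfpre\NPAw$ cannot identify different applications of $G$ (the independence clause of \prettyref{pro:cutderiv}). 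So the chain of provable equalities in your step~1 breaks exactly at the under-binder reductions, and these are unavoidable for your step~3, which needs the bodies under the $G$-binders to be fully normal; your output term then in general fails the required clause $\Forall{x}\, tx =_0 \tilde{t}x$. You sensed the non-extensionality obstacle, but applied it only to the final coding step---it already defeats the normalization itself.

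The repair is to abandon the $\lambda$-calculus presentation and normalize combinatorially, which is what the cited proof does. In the combinatory setting there are no binders at all, so every $\ls{SUB}$-redex is an honest subterm of the ambient type-$0$ term and every contraction is provable; moreover every subterm of the normal form of $tx$---in particular every argument of $G$---automatically has its free variables among those of $tx$ itself, so the ``only $f^1$ free'' clause reduces to your coding step, where the substituted projections do fire as genuine subterm reductions. A stuck $\Cond_\rho$ ($\rho\neq 0$) sitting \emph{inside} an argument of $G$ cannot be removed by rewriting inside that argument (same obstruction as above); instead it must be hoisted out \emph{through} $G$, i.e.\ one proves $G(D[\Cond_\rho(a,b,c)]) =_0 \Cond_0(a, G(D[b]), G(D[c]))$ by your induction-rule argument with the type-$0$ context $C[\cdot] := G(D[\cdot])$, which rewrites the whole $G$-application rather than its inside. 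Together with a termination measure for the interleaved contraction and hoisting, this yields the theorem; the price is that the resulting $G$-arguments are not $\beta$-normal $\lambda$-terms but merely $T_0[\Cond_0,G]$-terms in a single type-$1$ variable---which is all the statement claims, and all that \prettyref{lem:meas1} needs.
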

\begin{proof}
  See \cite[Theorem~24]{KK12}, see also \cite[Proof of Proposition~4.2]{uK99}.
\end{proof}

\begin{proposition}\label{pro:cutderiv}
  Let $G$ be a new type $3$ constant and let $\lf{A}$ be a formula 
  with \[
  \qfpre\NPAw \vdash \lf{A}
  \]
  and containing only type~$0$ variables free.

  There exists a formula $\tilde{\lf{A}}$ such that $\EPAw$ proves $\lf{A} \IFF \tilde{\lf{A}}$ and that there is a derivation $\mathcal{\tilde{D}}$ of $\qfpre\NPAw \vdash \tilde{\lf{A}}$ where every occurring term is normalized according to \prettyref{thm:cut3}, i.e.\@ each occurrence of $G$ is of the form $G(t_i[f^1])$ for a term $t_i$.

  Moreover, these applications of $G$ can be chosen independently from each other in the sense that
  \[
  \qfpre\NPAw \nvdash P[G(t')] =_0 P[G(t'')] \qquad\text{for a fresh variable $P$}
  \]
  for all type 0 substitution instances $t', t''$ of $t_i$ resp.\ $t_j$ with $i\neq j$. (In other words, \qfpre\NPAw  does not see that the $G(t'),G(t'')$ are applications of $G$ and not just an arbitrary term of suitable type and with the same free variables. Hence they may be replaced independently.)

\end{proposition}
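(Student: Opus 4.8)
The plan is to fix a derivation $\mathcal{D}$ of $\qfpre\NPAw \vdash \lf{A}$ and rewrite it into a derivation $\tilde{\mathcal{D}}$ in which every term has been put into the normal form supplied by \prettyref{thm:cut3}, carrying the endformula $\lf{A}$ to a normalized version $\tilde{\lf{A}}$. I would first record that, since $\lf{A}$ has only type-$0$ variables free and since in $\qfpre\NPAw$ no axiom can instantiate a variable of type $>0$, one may assume $\mathcal{D}$ contains no variables of type $>0$ except those bound inside terms. Every term occurring in $\mathcal{D}$ is then a term of $T_0[(\Cond_\rho)_\rho, G]$ (together with $\mu$, if present) whose free variables are of type $0$; applying \prettyref{thm:cut3} to $\lambda\vec{x}.\,s$ for each such type-$0$ term $s[\vec{x}]$ yields a provably equal $\tilde{s}$ in which every occurrence of $G$ has the required shape $G(t_i[f^1])$, the argument depending only on the locally abstracted $f^1$. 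This assigns to each term $s$ a normalized $\tilde{s}$ with $\qfpre\NPAw \vdash s =_0 \tilde{s}$, and, crucially, the ambient type-$0$ variables do not appear inside these $G$-arguments.

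To build $\tilde{\mathcal{D}}$ I would proceed by induction on $\mathcal{D}$, normalizing the terms in every sequent and checking that each inference survives. The conversion axioms $(\ls{SUB})$ and $(\lp{SUB_\Cond})$ are unproblematic: both sides have the same normal form, so after normalization they collapse to reflexivity. For the propositional rules the logical skeleton is untouched and one only normalizes the side terms uniformly. The two places needing care are the $=_0$-congruence axioms and the quantifier-free induction rule, where a single variable is substituted by several terms and one must know that normalization commutes with this substitution up to provable equality; whenever the naive normalization $\widetilde{s[t]}$ differs from $s[\tilde{t}]$ one repairs the step with a short equality derivation built from the identities $\qfpre\NPAw\vdash s =_0 \tilde{s}$, all of whose terms are themselves normalized. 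Carrying this through the tree yields $\tilde{\mathcal{D}}$ deriving the normalized endformula $\tilde{\lf{A}}$; and since each term is provably equal to its normal form already in $\qfpre\NPAw$, the formulas $\lf{A}$ and $\tilde{\lf{A}}$ are equivalent by $=_0$-congruence and propositional logic, so in particular $\EPAw\vdash\lf{A}\IFF\tilde{\lf{A}}$.

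The heart of the proposition, and the step I expect to be the real obstacle, is the independence clause. I would establish it semantically: exhibit a model of $\qfpre\NPAw$—interpreting the constant $G$ and the free variables, including $P$—in which $P[G(t')] =_0 P[G(t'')]$ fails. This is exactly where neutrality is indispensable. Since $\qfpre\NPAw$ carries no extensionality, it is sound for \emph{non-extensional} type structures, and in such a structure the type-$2$ arguments $t'$ and $t''$, coming from syntactically distinct templates $t_i\not\equiv t_j$ (and staying distinct under the admissible type-$0$ substitutions, as the substituted variables do not occur inside the $G$-arguments), can be read off as distinct inputs. Because $G$ is constrained by no axiom of $\qfpre\NPAw$, it may be interpreted to send these two inputs to different natural numbers, and taking $P$ to be a function separating those numbers falsifies $P[G(t')] =_0 P[G(t'')]$. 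This gives the required non-derivability and expresses that $\qfpre\NPAw$ cannot see the two occurrences of $G$ as related, so that they may later be replaced independently by the measure values of \prettyref{lem:meas1}.

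The difficulties are thus concentrated in two places. In the first part it is the bookkeeping needed to push the term normalization through the congruence axioms and the induction rule, which hinges on normalization commuting with substitution up to $\qfpre\NPAw$-provable equality. In the independence part it is the construction of a non-extensional model that validates all axioms of $\qfpre\NPAw$—the substitution schemata, the induction rule, and the $=_0$-axioms—while still separating syntactically distinct $G$-arguments; verifying soundness for this model, and that distinct occurrences really do give distinct, substitution-stable arguments, is the crux on which the whole replacement strategy for $\leb$ rests.
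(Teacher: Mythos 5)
Your first half---fixing a derivation of $\lf{A}$, noting that only type-$0$ variables occur, normalizing every term via \prettyref{thm:cut3}, and patching the $=_0$-congruence and induction-rule steps with the provable equalities---is essentially the strategy of the source the paper cites for this proposition (Proposition~26 and Remark~27 of \cite{KK12}), and the bookkeeping issues you flag there are real but manageable. The problem is the independence clause, which you yourself call the heart of the matter, and where your argument contains a genuine gap: your central premise, that ``$G$ is constrained by no axiom of $\qfpre\NPAw$'' and hence may be interpreted to separate any two syntactically distinct arguments, is false. The schemata $(\ls{SUB})$ and $(\lp{SUB_\Cond})$ range over \emph{all} type-$0$ terms $t[\cdot]$, and in $\qfpre\NPAw$ axioms may be instantiated with arbitrary terms; in particular the hole of $t[\cdot]$ may sit inside the argument of $G$. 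Taking $t[\cdot]:\equiv P[G(\cdot)]$ gives, for instance, the axiom instance $P[G(\Pi s r)] =_0 P[G(s)]$, so every model of $\qfpre\NPAw$ must identify $G$ on interconvertible arguments. Consequently, syntactic distinctness $t_i\not\equiv t_j$ does not yield non-derivability: if $t_i$ is, say, a $(\ls{SUB})$-redex contracting to $t_j$ (nothing in \prettyref{thm:cut3} excludes this; the theorem prescribes only the \emph{shape} of $G$-occurrences, not redex-freeness of their arguments), then $P[G(t_i)]=_0 P[G(t_j)]$ is outright derivable and no model whatsoever separates them, so your argument would prove a false statement. This is precisely why the proposition says the applications ``can be \emph{chosen}'' independently: the real content of the proof is a choice/merging step in the construction of $\tilde{\mathcal{D}}$ and of the list $(t_i)$, under which occurrences that the calculus can relate receive the same index, together with an argument that occurrences with distinct indices cannot be related by \emph{any} derivation. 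Your proposal contains no such step.

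Moreover, even after this is repaired, the model you appeal to is not routine. The naive term model (closed terms modulo conversion) makes the quantifier-free induction rule unsound, since its type-$0$ domain contains non-numeral elements such as $G(t)$, to which the conclusion of the rule may be applied; a non-extensional structure whose type-$0$ domain is $\Nat$ validates the conversion axioms by identifying convertible terms, but then one must actually prove that it can be arranged to assign distinct denotations to the particular pair $t_i, t_j$ at hand---which is delicate because, after normalization, the arguments of $G$ may themselves contain nested occurrences of $G$, so their denotations depend on the very interpretation of $G$ one is trying to define. You identify ``verifying soundness'' of such a model as the crux but give no argument for it; since the replacement strategy for $\leb$ via \prettyref{lem:meas1} rests entirely on this clause, the proposal as it stands does not establish the proposition.
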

\begin{proof}
  See Proposition~26 and Remark~27 in \cite{KK12}.
\end{proof}

\section{The Proofs}\label{sec:proof}
\begin{proof}[Proof of \prettyref{thm:leb}]
  Suppose 
  \[
  \ls{ACA_0^\omega} + \lpf{\mu} + \lpf{\leb} \vdash \Forall{f^1} \Exists{g^1} \lf{A}_\textit{ar}(f,g)
  \]
  where $\lf{A}_\textit{ar}$ is arithmetic and contains only $f,g$ free. Using $\mu$ one can find a quantifier-free formula $\lf{A}_\qf(f,g)$ equivalent to $\lf{A}_\textit{ar}$. Thus,
  \[
  \ls{RCA_0^\omega} + \lpf{\mu} + \lpf{\leb} \vdash \Forall{f^1} \Exists{g^1} \lf{A}_\qf(f,g)
  .\]
  By \prettyref{cor:extelim} we get
  \begin{equation}\label{eq:1}
    \NPAw + \lp[QF]{AC^{1,0}} + \lpf{\mu} + \lpf{\leb} \vdash \Forall{f^1} \Exists{g^1} \lf{A}_\qf(f,g)
  .\end{equation}
  Now taking a closer look at the axiom \lpf{\leb} we note that real number equality and other quantification over type $0$ variables can be regarded as quantifier-free relative to \lpf{\mu}. Thus, with \lp[QF]{AC} we may build a choice function $\upepsilon$ for $Y$ and arrive at the following equivalent formula.
  \[
  \left\{ 
  \begin{aligned}
    \Exists{\leb^3 \!, \upepsilon^3} \Big( & \hphantom{\!\AND\,} \Forall{X^2} \left(\leb(X) \ge_\Real 0 
      \AND \left(X(\upepsilon(X))\neq_0 0) \IMPL \leb(X) =_{\Real} 0 \right)\right) \\
    &{\!\AND\,} \Forall{(X_i)^2} \left( \leb\left(\bigcup_{i\in\Nat} X'_i\right) =_\Real \sum_{i\in\Nat} \leb(X'_i)\right) \\
    &{\!\AND\,} \Forall{s^0} \leb([s])=_\Real 2^{-\lth(s)} \\
    &{\!\AND\,} \Forall{X^2} \leb(\lambda f^1 \!.\, \sg(Xf)) =_\Real \leb(\lambda f^1 \!.\, X(\lambda x^0 \!.\, \sg(fx))) =_\Real \leb(X) \Big)
  \end{aligned}
  \right.
  \]
  Coding $X,(X_i)$ into one type $2$ object $X$ and using $\mu$ to get rid of quantification over type $0$ variables we can rewrite the formula into the following form $\lpf{\leb}_\qf[\leb^3, \upepsilon^3, X^2]$ where $\lpf{\leb}_\qf$ is quantifier-free. Thus, we get 
  \[
  \NPAw + \lp[QF]{AC} + \lpf{\mu} \vdash \lpf{\leb} \IFF \Exists{\leb^3 \!, \upepsilon^3} \Forall{X^2} \lpf{\leb}_\qf(\leb, \upepsilon, X) 
  .\]
  
  Using the deduction theorem in \eqref{eq:1} and applying this we arrive at
  \[
  \NPAw + \lp[QF]{AC} + \lpf{\mu} \vdash \left(\Exists{\leb^3 \!, \upepsilon^3} \Forall{X^2} \lpf{\leb}_\qf(\leb, \upepsilon, X)\right) \IMPL \Forall{f^1} \Exists{g^1} \lf{A}_\qf(f,g) 
  .\]
  Pulling out the quantifiers yields
  \[
  \NPAw + \lp[QF]{AC} + \lpf{\mu} \vdash \Forall{\leb^3 \!, \upepsilon^3 \!,f^1} \Exists{X^2 \!,g^1} \big(\lpf{\leb}_\qf(\leb, \upepsilon, X) \IMPL \lf{A}_\qf(f,g) \big)
  .\]
  The functional interpretation, \prettyref{thm:ndint}, gives now terms $t_X[\leb^3 \!, \upepsilon^3 \!,f^1], t_g[\leb^3 \!, \upepsilon^3 \!,f^1]$ and
  \[
  \qfpre\NPAw + \lpf{\mu} \vdash \lpf{\leb}_\qf(\leb, \upepsilon, t_X[\leb, \upepsilon,f]) \IMPL \lf{A}_\qf(f,t_g[\leb, \upepsilon,f]) 
  .\]
  Now by \prettyref{pro:cutderiv} there exists terms $(t_i)$, and a derivation of a---over \EPAw equivalent---formula, such that each application of $\leb, \upepsilon$ is of the form $\leb(t_i[\tilde{f}^1]),\upepsilon(t_i[\tilde{f}^1])$ for a fresh $\tilde{f}^1$. By \prettyref{pro:cutderiv} these terms can be replaced independently.  Thus, we replace them, starting with the innermost, with the construction from \prettyref{lem:meas1}. With this the substituted instances of the premise become provable and we get
  \[
  \EPAw +  \lpf{\mu} \vdash \lf{A}_\qf(f,t'_g[f])
  \]
  where $t_g'$ is the normalization of $t_g$ where the applications of $\leb, \upepsilon$ have been replaced.
\end{proof}

\begin{proof}[Proof of \prettyref{thm:lebex}]
  For a proof of \prettyref{thm:lebex} note that the proof of \prettyref{thm:leb} actually extracts the program $\lambda f.\, t'_g[f]\in T_0[\mu]$. If $\lf{A}_\qf$ does not contain $\mu$ then one can prove the totality of $t'_g[f]$ in \ls{ACA_0^\omega}, see \prettyref{rem:mucons}. Thus we effectively get a derivation for 
  \[
  \ls{ACA_0^\omega} \vdash \Forall{f} \Exists{g} \lf{A}_\qf(f,g)
  .\]
  Now one can apply the functional interpretation and extract a term $t\in T_0[B_{0,1}]$ where $B_{0,1}$ is the bar-recursor of lowest type, see Section~11.3 of \cite{uK08}. Since $t$ is only of degree $2$ one can find an equivalent term $t'\in T$, see \cite[Corollary~4.4.(1)]{uK99}.
\end{proof}

\begin{remark}
  Looking at the proof of the Theorems \ref{thm:leb} and \ref{thm:lebex} one immediately sees that one can add any degree $2$ functional $F$ to the theories if the respective variant of \prettyref{lem:meas1} stays intact, i.e.~for each sequence of sets indexed by $f^1$ coded by a term $t[f]\in T_0[\mu, F]$ the functions $\lambda f .\, \leb(t[f])$ and $\lambda f.\, \upepsilon(t[f])$ are definable using a term $t'[f]\in T_0[\mu, F]$.

  From this it is obvious that one can add the transfinitely iterated Turing jump operator 
  \[
  \textup{TJ}(\alpha^0,X^1):= X^{(\alpha)},
  \]
  where $\alpha$ is the code of an ordinal, to the theories in the Theorems \ref{thm:leb} and \ref{thm:lebex}.
  Adding this operator allows one to define all sets definable in \lp{ATR_0}. However one cannot iterate the use of $\leb$ transfinitely, since it is, a priori, not coded by the Turing jump.
  Also, one can take for $F$ any other provably measurable function.
\end{remark}

\section{Extension: Baire property}\label{sec:baire}
In this section we will describe how to extend by the statement that all sets of reals have the property of Baire (\lp{BP}).
The same techniques as for \lpf{\leb} apply since the collection of all sets with the property of Baire forms also a $\sigma$\nobreakdash-algebra.

Recall that a set $X$ has the property of Baire if there exists an open set $G$ such that the symmetric difference $X \symdiff G$ is meager, i.e.\@ is the countable union of nowhere dense sets. 
Since a set is nowhere dense if{f} its closure is, a set is meager if{f} it is contained in a countable union of \emph{closed}, nowhere dense sets.

Now we can code an open set $G$ by a type $1$ function $o_G$ listing basic open sets building $G$, or in other words $G:= \bigcup_{n\in\Nat} [o_G(n)]$. With this, membership $f\in G$ is given by the $\Sigma^0_1$-formula $\Exists{n^0} o_G(n) = \langle f(0),\dots,f(\lth(o_G(n)))\rangle$. In a similar way, one can check whether a basic open set $[s]$ intersects $G$. Closed sets can then be coded as complements of open sets.
With this, we can formulate \lp{BP} in the following way.
\[
\lp{BP}\colon \left\{
  \begin{multlined}
    \Forall{X^2\subseteq 2^{\Nat}} \Exists{G^1\,\text{open}} \Exists{(H_n)_n^1\,\text{closed}}  \\
    \left(\Forall{n^0,s^0} \left([s] \nsubseteq H_n \right) \AND \Forall{f^1} \left(f\in X \symdiff G \IMPL \Exists{n} f\in H_n\right) \right) .
  \end{multlined}
\right.
\]

Since quantification over $X$ is the only quantification over degree $>1$, we get
\[
\lp{BP}_{\! e} \IFF \left\{
  \begin{multlined}
    \Forall{X^2\subseteq 2^{\Nat}} \big(X =^e X \IMPL \Exists{G^1\text{ open}} \Exists{(H_n)_n^1\text{ closed}}  \\
    \left(\Forall{n^0,s^0} \left([s] \nsubseteq H_n \right) \AND \Forall{f^1} \left(f\in X \symdiff G \IMPL \Exists{n} f\in H_n\right) \right) \big).
  \end{multlined}
\right.
\]
Thus, we have that $\lp{BP} \IMPL \lp{BP}_{\! e}$ and we can add \lp{BP} to the systems in \prettyref{cor:extelim}.

We will now show how to extend \prettyref{thm:leb} to also include \lp{BP}. 
For this suppose that 
\[
\ls{ACA_0^\omega} + \lpf{\mu} + \lpf{\leb} + \lp{BP} \vdash \Forall{f^1} \Exists{g^1} \lf{A}_\qf(f,g)
\]
for a $\Pi^1_2$ statement $\Forall{f^1} \Exists{g^1} \lf{A}_\qf(f,g)$.

By the previous considerations we have that already $\NPAw + \lpf{\mu} + \lpf{\leb} + \lpf{BP}$ proves $\Forall{f^1} \Exists{g^1} \lf{A}_\qf(f,g)$.
Now we could strengthen \lp{BP} to the following uniform variant.
\[ 
\lp{BP_{\!\mathit{u}}} \colon
\left\{
  \begin{multlined}
    \Exists{\mathcal{G}, \mathcal{H}_n} \Forall{X^2\subseteq 2^{\Nat}}  \\
    \left(\Forall{n^0,s^0} \left([s] \nsubseteq \mathcal{H}_n(X) \right) \AND \Forall{f^1} \left(f\in X \symdiff \mathcal{G}(X) \IMPL \Exists{n} f\in \mathcal{H}_n(X)\right) \right) .
  \end{multlined}
\right.
\]
(Note that we make use here of the fact that $\mathcal{G}, \mathcal{H}_n$ may be non-extensional.)
Again, \lp{BP_{\!\mathit{u}}} can be written, modulo $\mu$ and coding, in the form $\Exists{\mathcal{G}, \mathcal{H}_n} \Forall{X^2} \lp{BP_{\!\mathit{u,qf}}}(\mathcal{G}, \mathcal{H}_n,X)$. 

Arguing as in the proof of \prettyref{thm:leb} we obtain using the functional interpretation terms $t_X[\leb, \upepsilon,\mathcal{G}, \mathcal{H}_n, f],   t_X'[\leb, \upepsilon,\mathcal{G}, \mathcal{H}_n, f],  t_g[\leb, \upepsilon,\mathcal{G}, \mathcal{H}_n, f]$, such that
\begin{multline*}
  \qfpre\NPAw + \lpf{\mu} \vdash  \\ \lpf{\leb}_\qf(\leb, \upepsilon, t_X[\leb, \upepsilon,\mathcal{G}, \mathcal{H}_n,f]) \AND \lp{BP_{\!\mathit{u,qf}}}(\mathcal{G}, \mathcal{H}_n, t_X'[\leb, \upepsilon,\mathcal{G}, \mathcal{H}_n, f]) \\  \IMPL \lf{A}_\qf(f,t_g[\leb, \upepsilon,f]) 
.
\end{multline*}
The terms can be normalized such that each application of $\leb,\epsilon,\mathcal{G}, \mathcal{H}_n$ is of the form $\mathcal{G}(t[f^1])$ respectively,  for a fresh $f$. Again these term will be replaced starting with the innermost. Occurrences of $\mathcal{G}, \mathcal{H}_n$ will be replaced by the following.

Suppose $t\in T_0$. Then the set $X$ coded by $t$ is open thus setting $\mathcal{G}(t) = X$ and $\mathcal{H}_n(t)=\emptyset$ suffices.
If $t\in T_0[\mu]$ then the set $X$ is of the form \eqref{eq:x}. To build $\mathcal{G}(t), \mathcal{H}_n(t)$ is suffices to note the following.
\begin{itemize}
\item Countable unions can be handled by
  \[
  \mathcal{G}\Big(\bigcup\nolimits_{n}\! X_n\Big) := \bigcup\nolimits_n \!\mathcal{G}(X_n), \qquad \mathcal{H}_{\langle n_1,n_2\rangle}\Big(\bigcup\nolimits_{n}\! X_n\Big) := \mathcal{H}_{n_1}(X_{n_2}),
  \]
  since the union of open sets is open and a countable collection of countably many nowhere dense sets is countable.
\item Inversion can be handled by 
  \begin{align*}
    \mathcal{G}(\mathcal{2^\Nat} \setminus X) &:= \text{``Interior of $\mathcal{2^\Nat} \setminus \mathcal{G}(X)$''}, \\
    \mathcal{H}_n(\mathcal{2^\Nat} \setminus X) &:=
    \begin{cases}
      \text{``Border of $\mathcal{2^\Nat} \setminus \mathcal{G}(X)$''} & \text{if $n=0$,} \\
      \mathcal{H}_{n-1}(X) & \text{if $n>0$.}
    \end{cases}
  \end{align*}
  It is easy to see that this is definable using a term in $T_0[\mu]$. It is correct, since the topological border is nowhere dense and $(\mathcal{2^\Nat} \setminus X) \symdiff (\mathcal{2^\Nat} \setminus \mathcal{G}(X)) = X \symdiff \mathcal{G}(X)$ and thus $(\mathcal{2^\Nat} \setminus X) \symdiff \text{``Interior of $\mathcal{2^\Nat} \setminus \mathcal{G}(X)$''} \subseteq  \bigcup_n\mathcal{H}_n(\mathcal{2^\Nat} \setminus X)$.
\end{itemize}

In total this gives the following theorem.
\begin{theorem}
  Theorems \ref{thm:leb} and \ref{thm:lebex} remain true if \lp{BP} is added.
\end{theorem}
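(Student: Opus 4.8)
The plan is to re-run the entire argument of \prettyref{thm:leb} with \lp{BP} treated as a \emph{second} type~$3$ axiom carried alongside \lpf{\leb}, exploiting the fact that the proof of \prettyref{thm:leb} interacts with a higher-type axiom only through three interfaces: (a) the elimination of extensionality, (b) the functional interpretation together with the normalization of \prettyref{thm:cut3} and \prettyref{pro:cutderiv}, and (c) a replacement lemma of the shape of \prettyref{lem:meas1}. First I would check that \lp{BP} survives interface (a). Since the only quantifier over an object of degree $>1$ in \lp{BP} is the outermost universal quantifier over $X^2$, relativising it to hereditarily extensional objects only weakens the axiom, so $\lp{BP}\IMPL \lp{BP}_{\! e}$; hence, exactly as for \lpf{\leb}, one may add \lp{BP} to the systems of \prettyref{cor:extelim}.

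Next I would Skolemise. Passing to \NPAw, where functionals need not be extensional, I replace \lp{BP} by its uniform variant \lp{BP_{\!\mathit{u}}}, introducing type~$3$ functionals $\mathcal{G},\mathcal{H}_n$ that produce, for each coded set $X^2$, an open set and a sequence of closed nowhere-dense sets witnessing the Baire property of $X$. Coding and \lpf{\mu} rewrite this, as in the measure case, into $\Exists{\mathcal{G},\mathcal{H}_n}\Forall{X^2}\lp{BP_{\!\mathit{u,qf}}}(\mathcal{G},\mathcal{H}_n,X)$ with $\lp{BP_{\!\mathit{u,qf}}}$ quantifier-free. I then run the proof of \prettyref{thm:leb} verbatim, now carrying the four functionals $\leb,\upepsilon,\mathcal{G},\mathcal{H}_n$ through the deduction theorem, the functional interpretation (\prettyref{thm:ndint}) and the normalization (\prettyref{thm:cut3}, \prettyref{pro:cutderiv}). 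This yields a derivation in $\qfpre\NPAw+\lpf{\mu}$ in which every occurrence of $\mathcal{G}$ and $\mathcal{H}_n$ is of the form $\mathcal{G}(t_i[f^1])$, $\mathcal{H}_n(t_i[f^1])$ for terms $t_i\in T_0[\mu]$ coding arithmetical sets, and in which these occurrences may be substituted independently.

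The decisive step is the \lp{BP}-analogue of \prettyref{lem:meas1}: for an arithmetical set $X$ of the form \eqref{eq:x}, one must define witnesses $\mathcal{G}(X),\mathcal{H}_n(X)$ by a term in $T_0[\mu]$. I would build these by induction on the $\bigcup/\bigcap$-structure of \eqref{eq:x}, following the three clauses recorded before the theorem: in the base case $t\in T_0$ codes a clopen set, so $\mathcal{G}(X):=X$ and $\mathcal{H}_n:=\emptyset$; countable unions are handled componentwise with a pairing reindexing of the nowhere-dense pieces; and complements are handled by taking the interior and the (nowhere-dense) topological boundary of $2^\Nat\setminus\mathcal{G}(X)$. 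Each operation is plainly $T_0[\mu]$-definable, so $\mathcal{G},\mathcal{H}_n$ can be replaced by $T_0[\mu]$ terms exactly as $\leb,\upepsilon$ are; the premise $\lp{BP_{\!\mathit{u,qf}}}$ then becomes provable and drops out, leaving $\EPAw+\lpf{\mu}\vdash \lf{A}_\qf(f,t'_g[f])$ with $t'_g\in T_0[\mu]$. Since the extracted term still lies in $T_0[\mu]$ (and in $T$ under the hypotheses of \prettyref{thm:lebex}, by the degree-$2$ collapse used there), both \prettyref{thm:leb} and \prettyref{thm:lebex} go through unchanged.

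The main obstacle is the correctness, rather than the definability, of this replacement. For the union clause one must check that $\bigl(\bigcup_n X_n\bigr)\symdiff\bigcup_n\mathcal{G}(X_n)\subseteq\bigcup_{n,m}H^{(n)}_m$, i.e.\ that a countable union of countable families of nowhere-dense sets is again covered by a single countable family, which is exactly where the pairing reindexing is needed. For the complement clause the point is the identity $(2^\Nat\setminus X)\symdiff(2^\Nat\setminus\mathcal{G}(X))=X\symdiff\mathcal{G}(X)$ together with the fact that the boundary $\partial(2^\Nat\setminus\mathcal{G}(X))$ of a closed set is nowhere dense, so that $(2^\Nat\setminus X)\symdiff\operatorname{int}(2^\Nat\setminus\mathcal{G}(X))$ remains inside the enlarged family $\bigcup_n\mathcal{H}_n(2^\Nat\setminus X)$. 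Verifying that these topological facts are available in the quantifier-free setting is precisely what makes the $\sigma$-algebra structure of the Baire-property sets do the same work that $\sigma$-additivity did for $\leb$.
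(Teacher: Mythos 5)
Your proposal is correct and follows essentially the same route as the paper: the observation that $\lp{BP}\IMPL\lp{BP}_{\! e}$ (so \prettyref{cor:extelim} extends), the passage to the non-extensional uniform variant $\lp{BP_{\!\mathit{u}}}$ with Skolem functionals $\mathcal{G},\mathcal{H}_n$, the re-run of the functional interpretation and normalization alongside $\leb,\upepsilon$, and the replacement clauses for the arithmetical sets of form \eqref{eq:x} (clopen base case, pairing-reindexed countable unions, and complements via interior and nowhere-dense boundary) are exactly the paper's argument. The correctness checks you flag at the end are the same ones the paper records to justify the union and inversion clauses.
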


\bibliographystyle{amsalpha}
\bibliography{../bib}

\end{document}